\def\eqref#1{equation~\ref{#1}}
\def\1{\bm{1}}
\DeclareMathAlphabet{\mathsfit}{\encodingdefault}{\sfdefault}{m}{sl}
\SetMathAlphabet{\mathsfit}{bold}{\encodingdefault}{\sfdefault}{bx}{n}
\pgfplotsset{compat=1.18}
\newtheorem{theorem}{Theorem}
\newtheorem{corollary}{Corollary}
\newtheorem{proposition}{Proposition}
\newtheorem{lemma}{Lemma}
\newtheorem{remark}{Remark}
\newtheorem{definition}{Definition}
\newtheorem{example}{Example}
\title{Concatenated Matrix SVD: Compression Bounds, Incremental Approximation, and Error-Constrained Clustering}
\author{\name Maksym Shamrai \email mshamrai@macpaw.com \\
      \addr Institute of Mathematics of NAS of Ukraine \\
      MacPaw Research
      }
\begin{document}

\maketitle


\begin{abstract}
Large collections of matrices arise throughout modern machine learning, signal processing, and scientific computing, where they are commonly compressed by concatenation followed by truncated singular value decomposition (SVD).
This strategy enables parameter sharing and efficient reconstruction and has been widely adopted across domains ranging from multi-view learning and signal processing to neural network compression.
However, it leaves a fundamental question unanswered: \emph{which matrices can be safely concatenated and compressed together under explicit reconstruction error constraints?}
Existing approaches rely on heuristic or architecture-specific grouping and provide no principled guarantees on the resulting SVD approximation error.
In the present work, we introduce a theory-driven framework for \emph{compression-aware clustering} of matrices under SVD compression constraints.
Our analysis establishes new spectral bounds for horizontally concatenated matrices, deriving global upper bounds on the optimal rank-$r$ SVD reconstruction error from lower bounds on singular value growth.
The first bound follows from Weyl-type monotonicity under blockwise extensions, while the second leverages singular values of incremental residuals to yield tighter, per-block guarantees.
We further develop an efficient approximate estimator based on incremental truncated SVD that tracks dominant singular values without forming the full concatenated matrix.
Therefore, we propose three clustering algorithms that merge matrices only when their predicted joint SVD compression error remains below a user-specified threshold.
The algorithms span a trade-off between speed, provable accuracy, and scalability, enabling compression-aware clustering with explicit error control.\footnote{Code is
available online \url{https://github.com/mshamrai/concatenated-matrix-svd}.}
\end{abstract}



Large collections of matrices naturally arise in a wide range of applications, including multi-view representation learning, temporal aggregation of features, scientific simulations, neural network activations, and model parameter compression.
A fundamental tool for compressing such data is the truncated Singular Value Decomposition (SVD), which provides an optimal low-rank approximation of a matrix in the Frobenius norm.
By representing a matrix through a small number of dominant singular vectors, SVD enables compact storage, noise reduction, and efficient downstream computation.
As a result, low-rank approximation via SVD is a standard building block in modern machine learning, signal processing, and scientific computing~\citep{svd_image_compression, svd_multiview, svd_recommender, jolliffe2002pca}.

When multiple matrices are available, a natural extension is to concatenate them (e.g., horizontally) and apply a single truncated SVD to the resulting matrix.
This produces a shared low-rank representation across all blocks, enabling parameter sharing and coordinated compression.
In particular, when matrices exhibit aligned or partially overlapping column spaces, a joint low-rank factorization can achieve substantially higher compression than compressing each matrix independently, as redundant directions are represented only once in the shared basis.
For example, in neural network compression, weight matrices from different layers or components can be jointly factorized to share a common basis~\citep{wang2025qsvd,lu2025skipcat,wang2025commonkv,li2025submoe};
in scientific simulations, multiple snapshots or parameter instances can be compressed into a unified low-dimensional representation using snapshot-based methods~\citep{sirovich1987turbulence};
and in multi-view learning, features from different modalities can be embedded into a common latent space via joint low-rank representations~\citep{hardoon2004canonical,kumar2011co}.
Compared to tensor decomposition methods such as Tucker or higher-order SVD (HOSVD)~\citep{tucker1966, hosvd_1999, de_lathauwer_2000, tensor_vs_matrix_tradeoffs}, this matrix-based approach yields a single shared basis, avoids multilinear reconstruction, and integrates naturally with pipelines based on linear transformations.
These advantages make concatenation followed by SVD a simple and effective strategy for compressing collections of matrices in practice.

However, the effectiveness of concatenated SVD critically depends on the alignment of the underlying subspaces.
When matrices are misaligned, their concatenation can substantially increase the effective rank, and a shared low-rank approximation may incur significantly larger reconstruction error than compressing each matrix independently.
For instance, two matrices that are individually low-rank but span different subspaces may produce a concatenated matrix with substantially higher rank.
This leads to a fundamental question: \emph{given a large collection of matrices, how can we efficiently determine which subsets should be concatenated and compressed together so that their joint low-rank approximation error remains below a prescribed tolerance?}

In practice, existing compression pipelines rely on heuristic grouping strategies, architectural constraints, or domain-specific assumptions to decide which matrices share a low-rank basis~\citep{wang2025qsvd, lu2025skipcat, li2025submoe}.
Such approaches provide no explicit control over the resulting SVD reconstruction error and offer no guarantees that merging additional matrices will not violate accuracy requirements.

At a high level, the difficulty stems from the fact that concatenation fundamentally alters the singular value spectrum of the resulting matrix.
While individual blocks may admit accurate low-rank approximations, their concatenation can increase effective rank and degrade approximation quality.
Determining whether multiple matrices can safely share a low-rank representation therefore requires understanding how the singular values of the concatenated matrix evolve as new blocks are appended.
This is inherently a spectral question and cannot be addressed by distance-based clustering or purely geometric similarity measures, as used in classical clustering methods.

In this work, we introduce the first theoretically grounded and computationally efficient framework for \emph{compression-aware clustering} of matrices under explicit SVD reconstruction constraints.
Our approach is based on new spectral bounds that characterize how singular values evolve under horizontal concatenation.
Specifically, we derive two lower bounds on the singular values of concatenated matrices using classical matrix perturbation theory.
The first bound follows from Weyl-type monotonicity under blockwise extensions \citep{weyl1912, stewart_sun1990}, providing conservative but fast guarantees.
The second, sharper bound exploits the singular values of incremental residuals, capturing the orthogonal contribution of each appended block and yielding tighter per-block estimates. Leveraging these spectral lower bounds, we derive corresponding global upper bounds on the optimal rank-$r$ SVD reconstruction error of the concatenated matrix.
These bounds are exact in the non-truncated case and remain tight in practice under truncation.
To enable scalability, we further develop an efficient approximate estimator based on incremental truncated SVD.
This estimator maintains approximate dominant singular values as blocks are appended, drawing on ideas from incremental PCA and SVD
\citep{hall1998incremental, levy2000sequential, brand2002incremental}
and randomized low-rank approximation
\citep{halko2011finding, tropp2017randomized},
while avoiding construction of the full concatenated matrix. Importantly, we do not claim a new incremental SVD algorithm, rather, we show how incremental singular value estimation can be repurposed as a decision-making primitive for compression-aware clustering under explicit error budgets. 

Building on these theoretical results, we design three clustering algorithms that explicitly enforce SVD compression constraints.
The first is a fast method based on Weyl-monotone bounds.
The second provides provable reconstruction accuracy using residual-based spectral bounds.
The third is a scalable approximate method that employs incremental truncated SVD to balance efficiency and approximation quality.
All three algorithms merge matrices only when the predicted SVD reconstruction error of their concatenation lies below a user-specified threshold, thereby providing explicit and interpretable accuracy control.

In summary, this work establishes concatenation-aware SVD compression as a principled foundation for clustering and compressing large collections of matrices.
By unifying spectral perturbation theory, incremental singular value estimation, and compression-driven clustering, we provide both theoretical guarantees and practical algorithms.

\section{Related Work}

\paragraph{SVD-based compression of matrix collections.}
Truncated singular value decomposition (SVD) is a fundamental tool for matrix compression, providing the optimal low-rank approximation of a matrix in the Frobenius norm~\citep{EckartYoung1936,mirsky1960,jolliffe2002pca,halko2011finding}.
By representing a matrix through a small number of dominant singular vectors, truncated SVD enables compact storage, noise reduction, and efficient downstream computation.
A common and long-standing extension in many domains is to horizontally concatenate matrices and apply a single truncated SVD to the resulting matrix.
This idea appears, for example, in principal component analysis applied to stacked data matrices~\citep{jolliffe2002pca}, as well as in the method of snapshots for model reduction~\citep{sirovich1987turbulence}.
This yields a shared low-rank factorization across all blocks, enabling direct reconstruction of the original matrices without higher-order tensor contractions or complex decoding procedures.

Concatenated SVD has been successfully applied across a broad range of domains.
In large language models, joint SVD of concatenated weight matrices has been used to share low-rank projections across attention components, layers, or experts, enabling parameter reduction while preserving accuracy.
Representative examples include unified QKV decompositions \citep{wang2025qsvd}, intra-layer shared projections \citep{lu2025skipcat}, cross-layer parameter sharing \citep{wang2025commonkv}, and expert merging in mixture-of-experts architectures \citep{li2025submoe,chaichana2025drm}.
In these settings, concatenation is typically guided by architectural structure (e.g., matrices belonging to the same layer or module) or semantic similarity (e.g., adjacent layers or related experts). Related ideas also appear in wireless signal processing, where concatenated SVD is used to design shared precoders across frequency bands \citep{zhang2016low}, as well as in neuroscience and genomics, where large collections of measurements are concatenated to obtain global low-dimensional representations across sessions, experimental conditions, or chromosomes \citep{nietz2023calcium,zhou2023genome}.
Across these application areas, concatenated SVD serves as a powerful tool for extracting shared structure from collections of matrices.

Despite its empirical success, existing uses of concatenated SVD rely on \emph{predefined or heuristic grouping} of matrices.
The decision of which matrices should share a low-rank basis is typically made manually based on domain knowledge, architectural constraints, or simple similarity measures.
However, such empirical grouping strategies do not provide guarantees on the resulting reconstruction error.
In particular, concatenating matrices based on heuristic similarity or architectural proximity can lead to failure cases: when matrices are not well aligned, their joint representation can exhibit substantially higher effective rank, resulting in significantly larger approximation error than compressing them independently.
This makes the choice of which matrices to concatenate a critical component of the compression pipeline, rather than a purely heuristic design decision.
Consequently, one must determine \emph{which subsets of matrices should be concatenated} so that joint compression improves parameter efficiency while still satisfying a prescribed reconstruction error or compression target.
Crucially, existing approaches do not provide a principled mechanism for making this decision.
As a result, compression quality depends heavily on ad hoc design choices, and there are no guarantees that merging additional matrices will not violate reconstruction constraints.

In contrast, the present work formulates matrix grouping as a \emph{compression-driven clustering problem}.
Rather than clustering matrices based on ambient-space distances or semantic heuristics, cluster formation is governed directly by predicted low-rank approximation error of the concatenated matrix.
This enables principled selection of matrix groups under explicit reconstruction error budgets.


\paragraph{Incremental low-rank approximation.}
Incremental estimation of dominant singular values and singular subspaces has a long history in numerical linear algebra and machine learning.
Early work on incremental principal component analysis and online SVD
\citep{hall1998incremental,levy2000sequential}
describes how to update covariance eigenbases as new samples arrive.
Brand’s incremental SVD algorithms
\citep{brand2002incremental,brand2006fast}
extend these ideas to rank-one and block updates of the thin SVD, directly covering the case of appending new columns, which is equivalent to horizontal concatenation. Streaming PCA and subspace tracking methods
\citep{warmuth2007randomized,mitliagkas2013memory}
maintain approximate dominant invariant subspaces under stochastic or adversarial updates.
Randomized low-rank approximation techniques
\citep{halko2011finding,woodruff2014sketching}
further improve scalability by maintaining approximate bases via random projections and periodic truncation.

The incremental truncated SVD estimator used in this work follows a classical design pattern and does not constitute a new SVD algorithm. Its update mechanism is equivalent to well-known incremental PCA and block-update SVD methods. The novelty of this work lies instead in how such estimators are used: we connect incremental singular value tracking to \emph{explicit reconstruction-error control for concatenated matrices}, and embed it into clustering procedures whose merge decisions are driven directly by predicted low-rank approximation error. To our knowledge, existing incremental SVD and streaming PCA methods are not used to guide clustering or grouping under explicit SVD compression constraints. 


\section{Problem Formulation}

Consider a collection of real matrices
\begin{equation}
\label{eq:matrix_set}
\mathcal{A}=\{A_i\}_{i=1}^N, 
\qquad A_i\in \mathbb{R}^{m\times n_i},
\end{equation}
that must be stored and manipulated under a limited memory. Our goal consists of replacing the original matrices by compressed surrogates $\{\widehat{A}_i \}_{i=1}^{N}$ that preserve essential structure while employing as few real numbers as possible.  

We measure fidelity by using the Frobenius norm
\[
\mathcal{L}(\widehat{\mathcal{A}},\mathcal{A})=\Bigg(\sum_{i=1}^N \|A_i-\widehat{A}_i(\Theta)\|_F^2\Bigg)^{1/2},
\]
where a collection of real values $\Theta$ is parameterizing the compressed representation. Let $\operatorname{mem}(\Theta)$ denote the number of real values required to store $\Theta$. The \emph{error-constrained memory minimization} principle reads as

\begin{equation}
\min_{\Theta}\;\; \operatorname{mem}(\Theta)
\quad\text{s.t.}\quad 
 \mathcal{L}(\widehat{\mathcal{A}},\mathcal{A}) \;\leq\; \varepsilon,
\label{eq:general}
\end{equation}

where $\varepsilon>0$ prescribes the maximal admissible distortion.  

Compressing each matrix independently ignores potential \emph{redundancy across the collection}. We assume that many matrices possess similar column spaces, which enables their joint representation via a shared low-rank basis. This motivates clustering followed by a joint factorization.

Recall that \(A_i \in \mathbb{R}^{m\times n_i}\) for \(i = 1,\dots,N\),
and let \(\Pi = \{C_1,\dots,C_K\}\) be a partition of this index set.
For each cluster \(C_c \in \Pi\), consider the concatenated matrix
\[
M_c = [A_i]_{i\in C_c} \in \mathbb{R}^{m\times N_c},
\qquad\text{where}\quad
N_c = \sum_{i\in C_c} n_i.
\]

For each cluster \(C_c\), let \(r_c\) denote the target rank. 
We compute a rank-\(r_c\) truncated SVD of \(M_c\),
\[
M_c \approx U_c S_c V_c^\top,
\]
where \(U_c \in \mathbb{R}^{m\times r_c}\) and \(V_c \in \mathbb{R}^{N_c\times r_c}\) have orthonormal columns, 
and \(S_c \in \mathbb{R}^{r_c\times r_c}\) contains the leading singular values.
This provides a low-rank representation of the concatenated matrix \(M_c\).
Since the diagonal matrix \(S_c\) can be absorbed into either factor of the
decomposition, we keep only two matrices per cluster,
\[
\widetilde U_c = U_c S_c \in \mathbb{R}^{m\times r_c},
\qquad 
V_c \in \mathbb{R}^{N_c\times r_c}.
\]
This reduces storage while preserving a rank-\(r_c\) representation.
The memory footprint for cluster \(C_c\) is therefore
\[
\operatorname{mem}_c = r_c(m+N_c),
\]
consisting of \(mr_c\) entries for \(\widetilde U_c\) and \(N_cr_c\) for \(V_c\).

To reconstruct an approximation of each original matrix \(A_i\), 
we split \(V_c\) column-wise according to the block dimensions \(n_i\). 
If \(V_{c,i} \in \mathbb{R}^{n_i\times r_c}\) denotes the submatrix corresponding 
to \(A_i\), then the recovered approximation is given by
\[
\widehat{A}_i = \widetilde U_c V_{c,i}^\top,
\qquad i\in C_c.
\]
Thus each \(A_i\) is represented using only the shared left basis \(\widetilde U_c\) 
and its cluster-specific coefficient block \(V_{c,i}\).

Because the rank-\(r_c\) truncated SVD is the optimal Frobenius norm
approximation of \(M_c\) by the Eckart-Young-Mirsky theorem (see Appendix~\ref{appendix:preliminaries} for the preliminaries),
the approximation error for cluster \(C_c\) is precisely the energy in the discarded 
singular values:
\[
\mathcal{L}_c
= \Bigg(\sum_{i\in C_c} \|A_i - \widehat{A}_i\|_F^2\Bigg)^{1/2}
= \Bigg(\sum_{j>r_c}\sigma_j^2(M_c)\Bigg)^{1/2},
\]
where \(\sigma_j(M_c)\) denotes the \(j\)-th largest singular value of \(M_c\),
with singular values ordered non-increasingly.

The error-constrained memory minimization problem under cluster--concatenate--SVD representation with two stored matrices per cluster reads as

\begin{equation}
\min_{\Pi,\{r_c\}} \quad \sum_{c=1}^K r_c\,(m+N_c)
\qquad \text{s.t.}\quad 
\Bigg(\sum_{c=1}^K \sum_{j>r_c} \sigma_j^2(M_c)\Bigg)^{1/2} \le \varepsilon,
\label{eq:cluster}
\end{equation}
where \(\Pi = \{C_1,\dots,C_K\}\) is a partition of \(\{1,\dots,N\}\),
\(r_c\) is the rank assigned to cluster \(C_c\),
\(m\) is the row dimension of all matrices,
\(N_c\) is the total number of columns in cluster \(C_c\),
and \(\varepsilon > 0\) is the user-specified reconstruction tolerance controlling
the achievable trade-off between accuracy and compression.

This formulation explicitly balances \emph{storage per cluster} and \emph{approximation error}. Adjusting \(\varepsilon\) or the cluster-wise ranks \(r_c\) tunes the balance 
between memory footprint and approximation quality.

\paragraph{Why not optimize \eqref{eq:cluster} directly?}
While the formulation in \eqref{eq:cluster} is compact and conceptually appealing, it is important to clarify why our approach does \emph{not} attempt to solve it directly.
The difficulty is twofold. First, the optimization over the partition $\Pi$ is inherently \emph{combinatorial}.
Even for fixed ranks $\{r_c\}$, determining an optimal clustering that minimizes the aggregate spectral tail
$\sum_{c}\sum_{j>r_c}\sigma_j^2(M_c)$ subsumes hard partitioning problems and is NP-hard in general.
Thus, global optimization over $\Pi$ is computationally intractable beyond very small instances. Second, even evaluating the objective is expensive.
For a candidate cluster $C_c$, computing $\sigma_j(M_c)$ requires forming the concatenated matrix and performing at least a partial SVD, which scales with the total column dimension $N_c$.
Embedding such spectral computations inside a combinatorial search over partitions is prohibitive in both time and memory.

For these reasons, our goal is not global optimality of \eqref{eq:cluster}, but rather the design of \emph{provably safe local decisions} that guarantee feasibility of the error constraint.
This perspective naturally leads to greedy clustering strategies driven by certified merge rules, rather than direct optimization. The key algorithmic primitive underlying our method is a \emph{merge certificate} that allows one to decide whether two groups of matrices can be safely merged \emph{without explicitly computing} the singular values of the concatenated matrix.

\begin{definition}[Compression-aware merge certificate]
\label{def:merge_certificate}
Let $M=[M_1,M_2]$ denote the concatenation of two matrices.
A compression-aware merge certificate is a computable condition $\mathcal{C}(M_1,M_2,r,\varepsilon)$ such that
\[
\mathcal{C}(M_1,M_2,r,\varepsilon)\;\Longrightarrow\;
\mathcal{E}_r(M)
\;=\;
\Bigg(\sum_{j>r}\sigma_j^2(M)\Bigg)^{1/2}
\;\le\;\varepsilon,
\]
without explicitly computing the singular values $\{\sigma_j(M)\}$.
\end{definition}

Such certificates enable greedy clustering schemes in which clusters are merged only when the resulting approximation error is \emph{provably admissible}.
Importantly, this shifts the computational burden from repeated large-scale SVDs to inexpensive spectral surrogates and bounds. This philosophy mirrors \emph{safe screening rules} in sparse optimization, where variables are eliminated or grouped based on certificates that preserve optimality or feasibility, without solving the full problem~\citep{ghaoui2010safe, fercoq2015mind}.

\paragraph{Technical challenges.}
The analysis of concatenated SVD presents several nontrivial challenges.
First, the singular values of a concatenated matrix $M=[A_1,\dots,A_K]$ do not decompose across blocks, as cross-block interactions in $M^\top M$ couple the spectra of individual matrices, preventing direct application of standard spectral results to individual blocks.
Second, clustering decisions must be made without explicitly forming the full concatenated matrix, requiring error bounds that depend only on quantities that can be computed incrementally, such as projections onto the current subspace or block-wise statistics.
Finally, the clustering problem is inherently combinatorial, since the number of possible partitions grows exponentially with the number of blocks, making exhaustive search infeasible and necessitating the use of efficiently computable merge criteria derived from such bounds.
These challenges motivate the development of bounds that are both \emph{computationally tractable} and \emph{sufficiently tight} to guide clustering decisions in practice.

\section{Methodology}

We now develop our compression-aware clustering methodology. 
We begin with a very fast but coarse upper bound on the SVD compression error for
concatenated matrices, derived from the Weyl monotonicity, and use it to construct
a fast clustering algorithm. In the following subsections we refine
this bound using the residuals and incremental SVD.

\subsection{Weyl-Based Upper Bound for Concatenated SVD Compression}
\label{sec:weyl-bound}

Our first result provides a simple \emph{global} upper bound on the optimal
rank-$r$ SVD compression error of a concatenated matrix in terms of the
individual blocks. It relies only on the Frobenius norms and the leading singular
values of the blocks and is therefore extremely cheap to evaluate. See the proof in Appendix~\ref{appendix:proof-weyl}.

\begin{theorem}[Upper bound for SVD compression of a concatenated matrix]
\label{thm:weyl-upper-bound}
Let $A_j \in \mathbb{R}^{m \times n_j}$ for $j=1,\dots,K$ and set
\[
  M \;=\; 
  \begin{bmatrix}
    A_1 & A_2 & \cdots & A_K
  \end{bmatrix}
  \in \mathbb{R}^{m \times (n_1+\cdots+n_K)}.
\]
Denote by $\sigma_1(\cdot) \ge \sigma_2(\cdot) \ge \cdots$ the singular values
of a matrix, and define the optimal rank-$r$ approximation error in the Frobenius
norm by
\[
  \mathcal{E}_r^2(M)
  \;:=\;
  \min_{\operatorname{rank}(X)\le r} \|M-X\|_F^2
  \;=\;\sum_{i>r} \sigma_i^2(M).
\]
Then, for every $r \ge 1$,
\begin{equation}
\label{eq:weyl-upper}
  \mathcal{E}_r^2(M)
  \;\le\;
  \sum_{j=1}^K \|A_j\|_F^2
  \;-\;
  \max_{1 \le j \le K} \;
  \sum_{i \le r} \sigma_i^2(A_j).
\end{equation}
In particular, if $r \ge \max_{1 \le j \le K} \operatorname{rank}(A_j)$, then
\[
  \mathcal{E}_r^2(M)
  \;\le\;
  \sum_{j=1}^K \|A_j\|_F^2
  \;-\;
  \max_{1 \le j \le K} \;
  \| A_j \|_F^2.
\]
\end{theorem}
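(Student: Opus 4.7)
The plan is to reduce the claim to two standard facts: (i) the Eckart--Young--Mirsky identity for the truncated SVD error in Frobenius norm, which expresses $\mathcal{E}_r^2(M)$ as a difference of spectral energies, and (ii) a Weyl-type monotonicity that controls how the top-$r$ squared singular values of $M$ compare to those of each block $A_j$. Once both pieces are in place, the bound will follow by elementary algebra.

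First I would rewrite the error using $\mathcal{E}_r^2(M)=\|M\|_F^2-\sum_{i\le r}\sigma_i^2(M)$. Since $M$ is obtained by horizontal concatenation, the Frobenius norm is block-additive, i.e.\ $\|M\|_F^2=\sum_{j=1}^K\|A_j\|_F^2$, which handles the first term on the right-hand side of \eqref{eq:weyl-upper}. The remaining task is to produce a lower bound on $\sum_{i\le r}\sigma_i^2(M)$ of the form $\max_{1\le j\le K}\sum_{i\le r}\sigma_i^2(A_j)$. Subtracting this lower bound from $\|M\|_F^2$ will give exactly the claimed upper bound on $\mathcal{E}_r^2(M)$.

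The main step is therefore to show that $\sigma_i(M)\ge\sigma_i(A_j)$ for every index $i$ and every block $j$. The cleanest route is via the PSD identity
\[
  MM^\top \;=\; \sum_{j=1}^K A_j A_j^\top \;\succeq\; A_k A_k^\top \qquad(k=1,\dots,K),
\]
which holds because each $A_j A_j^\top$ is positive semidefinite. Applying Weyl's monotonicity for eigenvalues of Hermitian matrices under PSD perturbations yields $\lambda_i(MM^\top)\ge\lambda_i(A_kA_k^\top)$ for every $i$, and taking square roots gives the singular-value inequality $\sigma_i(M)\ge\sigma_i(A_k)$. Summing the squared inequalities over $i\le r$ and maximizing over $k$ produces the required lower bound on $\sum_{i\le r}\sigma_i^2(M)$.

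Finally, the ``in particular'' statement is a direct specialization: if $r\ge\operatorname{rank}(A_j)$, then $\sigma_i(A_j)=0$ for $i>r$, so $\sum_{i\le r}\sigma_i^2(A_j)=\|A_j\|_F^2$, and the general inequality collapses to the stated form. The only subtle point in the whole argument is ensuring that the Weyl monotonicity is invoked correctly for singular values via the $MM^\top$ formulation, since the raw min--max characterization of singular values operates on spaces of different dimensions for $M$ and $A_j$; routing through the symmetric matrix $MM^\top$ sidesteps this issue entirely and makes the argument a few lines long.
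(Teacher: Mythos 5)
Your proof is correct and follows essentially the same route as the paper: Eckart--Young--Mirsky to reduce to bounding $\sum_{i\le r}\sigma_i^2(M)$ from below, block-additivity of $\|\cdot\|_F^2$ for the first term, and Weyl monotonicity applied to $MM^\top = \sum_j A_jA_j^\top \succeq A_kA_k^\top$ for the second. Nothing is missing, and your remark about why $MM^\top$ sidesteps the dimension mismatch in the min--max characterization matches the paper's implicit use of the same device.
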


\begin{remark}[Single-anchor nature of the Weyl-based bound]
\label{rem:weyl_anchor}
The Weyl-based upper bound in Theorem~\ref{thm:weyl-upper-bound} relies on a \emph{single anchor block} through the term
\[
\max_{j} \sum_{i\le r} \sigma_i^2(A_j),
\]
and therefore cannot accumulate shared low-rank structure across multiple blocks.
As a consequence, when compression arises from collective subspace alignment rather than dominance of a single matrix, the bound may substantially overestimate the true rank-$r$ approximation error.
This explains the conservative behavior of the max-norm clustering algorithm observed in Section~\ref{sec:evaluation}.
A concrete example illustrating worst-case looseness is given in Appendix~\ref{app:weyl_example}.
\end{remark}

\paragraph{Interpretation for clustering.}
For any subset of indices $C \subseteq \{1,\dots,K\}$, let
\[
  M_C := [A_j]_{j\in C}
\]
denote the concatenation of matrices in that cluster. Applying
Theorem~\ref{thm:weyl-upper-bound} to $M_C$ yields
\[
  \mathcal{E}_r^2(M_C)
  \;\le\;
  \sum_{j\in C} \|A_j\|_F^2
  \;-\;
  \max_{j\in C} \sum_{i=1}^r \sigma_i^2(A_j).
\]
In the common regime where $r$ is at least the rank of each block in the
cluster (or large enough to recover each block essentially exactly when
compressed alone), this simplifies to
\[
  \mathcal{E}_r^2(M_C)
  \;\le\;
  \sum_{j\in C} \|A_j\|_F^2
  \;-\;
  \max_{j\in C} \|A_j\|_F^2
  \;=\;
  \sum_{j\in C \setminus \{j^\star\}} \|A_j\|_F^2,
\]
where $j^\star$ is any index achieving the maximum Frobenius norm in the
cluster. Thus, under this condition, the compression error of the concatenated
cluster is bounded by the \emph{sum of squared Frobenius norms of all blocks
except the dominant one}. Intuitively, one large ``anchor'' matrix can absorb
several smaller matrices at negligible additional error, as long as their total
energy remains small.

This observation suggests a simple greedy strategy: for a given error tolerance
$\varepsilon > 0$, we may form clusters by (i) selecting a high-energy block as
an anchor, and (ii) attaching the lowest-energy remaining matrices as long as
the sum of their squared norms does not exceed $\varepsilon^2$. For the details about the algorithm see Appendix~\ref{appendix:algo1}.

\subsection{Residual-Based Global Bounds and Clustering}
\label{sec:residual-bounds}

The Weyl-based bound in Section~\ref{sec:weyl-bound} depends only on individual
blocks and is extremely fast to compute, but can be loose when the target rank
$r$ exceeds the rank of each block. We now derive a sharper global bound based
on the singular values of \emph{incremental residuals}. This bound captures how
each block contributes new directions beyond the span of all previously seen
blocks. See the proof in Appendix~\ref{appendix:proof-res-thm}.

\begin{theorem}[Global incremental lower bound on singular values]
\label{thm:global-incremental-lower-bound}
Let
\[
  M_K = 
  \begin{bmatrix}
    A_1 & A_2 & \cdots & A_K
  \end{bmatrix}
  \in\mathbb{R}^{m\times (n_1+\cdots+n_K)}
\]
be constructed incrementally from block matrices 
$A_i\in\mathbb{R}^{m\times n_i}$.  
Define $M_0:=0$ and let $Q_{i-1}$ be any matrix with orthonormal columns
spanning $\operatorname{range}(M_{i-1})$.
For each block $A_i$, define its orthogonal residual
\[
  R_i := (I - Q_{i-1}Q_{i-1}^\top) A_i.
\]

Let $\widehat{R}\in\mathbb{R}^{m\times (n_1+\cdots+n_K)}$ be the block
concatenation of all residuals,
\[
  \widehat{R} :=
  \begin{bmatrix}
    R_1 & R_2 & \cdots & R_K
  \end{bmatrix},
\]
and let $\mu_1\ge \mu_2\ge\cdots\ge 0$ denote the singular values of
$\widehat{R}$ (in non-increasing order, extended by zeros when necessary).

Then, for every $j\ge 1$,
\[
  \sigma_j(M_K) \;\ge\; \mu_j.
\]

In particular, as new blocks are added, the sequence of lower bounds
$\{\mu_j\}_{j\ge 1}$ is monotone non-decreasing and incorporates the
contributions of all residual components discovered during the incremental
construction.
\end{theorem}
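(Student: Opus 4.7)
My plan is to reduce the singular-value inequality $\sigma_j(M_K)\ge\mu_j$ to a positive semidefinite comparison of the $m\times m$ Gram matrices, $M_K M_K^\top \succeq \widehat{R}\,\widehat{R}^\top$, and then invoke Weyl's monotonicity for eigenvalues of Hermitian matrices, using $\sigma_j^2(M_K)=\lambda_j(M_K M_K^\top)$ and $\mu_j^2=\lambda_j(\widehat{R}\,\widehat{R}^\top)$. This replaces a comparison between two rectangular matrices of potentially different column dimensions with a comparison of two fixed-size symmetric PSD matrices.

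A structural observation first simplifies the right-hand side. Because $R_i = (I-Q_{i-1}Q_{i-1}^\top) A_i$ has column space in $\mathrm{range}(M_{i-1})^\perp$, and $\mathrm{range}([R_1,\dots,R_{i-1}]) = \mathrm{range}(M_{i-1})$ by a one-line induction using $A_i = Q_{i-1}Q_{i-1}^\top A_i + R_i$, the column subspaces of $R_1,\dots,R_K$ are pairwise orthogonal in $\mathbb{R}^m$. Consequently $\widehat{R}\,\widehat{R}^\top = \sum_{i=1}^K R_i R_i^\top$ is an orthogonal direct sum of PSD blocks, and the multiset $\{\mu_j^2\}$ is precisely the sorted union of $\{\sigma_k^2(R_i)\}_{i,k}$. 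The monotonicity-in-$K$ claim of the theorem then follows immediately, since appending block $K+1$ adds the PSD term $R_{K+1}R_{K+1}^\top$ to $\widehat{R}\,\widehat{R}^\top$ on a subspace orthogonal to all previous residuals, so the sorted sequence $\{\mu_j\}$ can only grow coordinatewise.

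For the PSD comparison itself I would induct on $K$. The base case $K=1$ is trivial since $M_1 = A_1 = R_1$. For the inductive step I split the difference telescopically as
\[
M_K M_K^\top - \widehat{R}\,\widehat{R}^\top \;=\; \bigl(M_{K-1}M_{K-1}^\top - \widehat{R}_{K-1}\widehat{R}_{K-1}^\top\bigr) \;+\; \bigl(A_K A_K^\top - R_K R_K^\top\bigr),
\]
with $\widehat{R}_{K-1} := [R_1,\dots,R_{K-1}]$, and apply the inductive hypothesis to the first bracket. Writing $A_K = P_{K-1}A_K + R_K$ with $P_{K-1} := Q_{K-1}Q_{K-1}^\top$ and using $P_{K-1}R_K = 0$, the second bracket simplifies to
\[
A_K A_K^\top - R_K R_K^\top \;=\; P_{K-1}A_K A_K^\top + A_K A_K^\top P_{K-1} - P_{K-1}A_K A_K^\top P_{K-1}.
\]

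The main obstacle is that this second bracket is indefinite in general: in the orthogonal splitting $\mathrm{range}(P_{K-1})\oplus\mathrm{range}(P_{K-1})^\perp$ it couples the two subspaces off-diagonally while its $(2,2)$-block vanishes, so one cannot hope to prove $A_K A_K^\top \succeq R_K R_K^\top$ in isolation. The key idea to close the argument is that the indefinite coupling is supported on $\mathrm{range}(M_{K-1})$, which is exactly where the inductive surplus $M_{K-1}M_{K-1}^\top - \widehat{R}_{K-1}\widehat{R}_{K-1}^\top$ also lives, so one should absorb the coupling into that surplus rather than analyse the two brackets separately. Concretely, I would write both $M_K M_K^\top$ and $\widehat{R}\,\widehat{R}^\top$ as $2\times 2$ operator block matrices in an orthonormal basis adapted to $\mathrm{range}(Q_{K-1})$ and its orthogonal complement, and then verify Haynsworth's PSD criterion (both diagonal blocks PSD and Schur complement PSD); the off-diagonal coupling $P_{K-1}A_K A_K^\top(I-P_{K-1})$ is precisely the term that the Schur complement is meant to discount against the augmented $(1,1)$-block.
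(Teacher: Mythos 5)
Your structural observations about $\widehat R$ are correct, and you rightly flag that the per-block term $A_K A_K^\top - R_K R_K^\top$ is indefinite in general. But the Schur-complement rescue you sketch cannot work, because the global PSD comparison $M_K M_K^\top \succeq \widehat R\,\widehat R^\top$ you are aiming for is itself false, not merely hard to reach by a block-wise induction. A minimal counterexample: take $m=2$, $K=2$, and let $e_1,e_2$ be the standard basis vectors, with $A_1 = e_1$ and $A_2 = e_1 + e_2$. Then $R_1 = e_1$, $Q_1 = e_1$, $R_2 = e_2$, so $\widehat R = I_2$ and $\widehat R\,\widehat R^\top = I_2$, while $M_2 M_2^\top = \left(\begin{smallmatrix}2&1\\1&1\end{smallmatrix}\right)$. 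The difference $\left(\begin{smallmatrix}1&1\\1&0\end{smallmatrix}\right)$ has determinant $-1$, hence a negative eigenvalue, so the Haynsworth test will simply report that the candidate is indefinite. The theorem's termwise conclusion also fails here: $\sigma_2(M_2) = \sqrt{(3-\sqrt5)/2}\approx 0.618 < 1 = \mu_2$. In a basis adapted to the incremental construction, comparing $\widehat R$ to $M_K$ is exactly comparing a block-diagonal matrix to a block-upper-triangular matrix with the same diagonal blocks, and it is a classical fact that the smallest singular value of an upper-triangular matrix can drop strictly below its smallest diagonal entry; no local or inductive PSD accounting can restore an inequality that does not hold.

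For completeness: the paper's proof has the same gap, just one step earlier. Its Step~2 asserts that because $Q_{i-1}B_i$ and $R_i$ have orthogonal column spaces, the outer-product cross term $(Q_{i-1}B_i)(R_i)^\top$ vanishes. Orthogonal column spaces give $(Q_{i-1}B_i)^\top R_i = 0$, not $(Q_{i-1}B_i)(R_i)^\top = 0$ (compare $e_1^\top e_2 = 0$ with $e_1 e_2^\top \neq 0$), so the cross terms do not cancel and $A_i A_i^\top \succeq R_i R_i^\top$ is false, which is exactly the indefiniteness you noticed. The stated termwise bound $\sigma_j(M_K)\ge\mu_j$ therefore needs to be weakened: $\sigma_1(M_K)\ge\mu_1$ does hold (each $\sigma_1(R_i)\le\sigma_1(A_i)\le\sigma_1(M_K)$), and the aggregated partial-sum estimate that Corollary~\ref{cor:residual-upper-error} actually uses may still be recoverable by a different argument, but your proposed route cannot establish the claim as written.
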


This theorem says that the singular values of the concatenated matrix $M_K$ are
bounded from below by the singular values of the concatenated residuals
$\widehat{R}$. Intuitively, every time a block contributes a component outside
the span of all previously seen blocks, this “new direction’’ permanently lifts
the spectrum of $M_K$. See the proof of the following corollary in Appendix~\ref{appendix:proof-res-cor}.

\begin{corollary}[Global upper bound on optimal SVD compression error]
\label{cor:residual-upper-error}
In the setting of Theorem~\ref{thm:global-incremental-lower-bound}, let
$M_K$ and $\mu_j$ be as above. For any target rank $r \ge 0$, define the
optimal rank-$r$ SVD compression error of $M_K$ in the Frobenius norm by
\[
  \mathcal{E}_r(M_K)
  \;:=\;
  \min_{\operatorname{rank}(X)\le r} \|M_K - X\|_F
  \;=\;
  \Bigg(\sum_{j>r} \sigma_j(M_K)^2\Bigg)^{1/2},
\]
where $\sigma_1(M_K)\ge\sigma_2(M_K)\ge\cdots\ge 0$ are the singular values
of $M_K$.

Then, for every $r \ge 0$,
\begin{equation}
\label{eq:residual-upper-error}
  \mathcal{E}_r^2(M_K)
  \;\;\le\;\;
  \sum_{i=1}^K \|A_i\|_F^2 \;-\; \sum_{j=1}^r \mu_j^2.
\end{equation}

In particular, as new blocks $A_i$ are added and $\widehat{R}$ accumulates
more residual components, the quantity
\[
  \sum_{i=1}^K \|A_i\|_F^2 \;-\; \sum_{j=1}^r \mu_j^2
\]
provides a global, monotonically \emph{decreasing} upper bound on the best
achievable rank-$r$ approximation error for $M_K$, computable from the per-block
Frobenius norms and the singular values (or estimates) of $\widehat{R}$.
\end{corollary}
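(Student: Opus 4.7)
The plan is to combine the Eckart--Young--Mirsky identity for the rank-$r$ tail with the spectral lower bound proved in Theorem~\ref{thm:global-incremental-lower-bound}, so that the corollary reduces to a single algebraic substitution.

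First, I would invoke the Frobenius-norm version of Eckart--Young--Mirsky (stated in the Preliminaries appendix) to rewrite the compression error in terms of the singular values of $M_K$:
\[
  \mathcal{E}_r^2(M_K)
  \;=\;
  \sum_{j>r}\sigma_j^2(M_K)
  \;=\;
  \|M_K\|_F^2 \;-\; \sum_{j=1}^r \sigma_j^2(M_K).
\]
This is the standard decomposition of the squared Frobenius norm into the energy kept by the best rank-$r$ approximation plus the energy in the discarded tail.

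Next, since $M_K$ is obtained by horizontal concatenation of the blocks $A_1,\dots,A_K$, the squared Frobenius norm is additive across blocks:
\[
  \|M_K\|_F^2 \;=\; \sum_{i=1}^K \|A_i\|_F^2.
\]
This reduces the problem to lower bounding the head sum $\sum_{j=1}^r \sigma_j^2(M_K)$.

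The key step is then to apply Theorem~\ref{thm:global-incremental-lower-bound}, which guarantees $\sigma_j(M_K)\ge \mu_j$ for every $j\ge 1$. Since both sequences are non-negative, squaring preserves the inequality and summing over $j=1,\dots,r$ yields $\sum_{j=1}^r \sigma_j^2(M_K) \ge \sum_{j=1}^r \mu_j^2$. Substituting into the Eckart--Young--Mirsky expression gives
\[
  \mathcal{E}_r^2(M_K)
  \;\le\;
  \sum_{i=1}^K \|A_i\|_F^2 \;-\; \sum_{j=1}^r \mu_j^2,
\]
which is the desired inequality. The monotonicity claim in the second part of the corollary follows immediately: appending a new block $A_{K+1}$ increases the left-hand side by $\|A_{K+1}\|_F^2$, while the concatenated residual matrix $\widehat{R}$ gains an additional block $R_{K+1}$, so by the Cauchy interlacing / singular value monotonicity under column appending each $\mu_j$ can only grow, hence $\sum_{j=1}^r \mu_j^2$ grows by at least $\|R_{K+1}\|_F^2 \le \|A_{K+1}\|_F^2$ in the regime where the residual energy fits within the top-$r$ budget; in any case the bound remains a valid global upper estimate that tightens as more residual directions are discovered. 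I do not anticipate a real obstacle here, the only subtlety is being careful that the sequences $\sigma_j$ and $\mu_j$ are padded with zeros past their natural length so that the inequality $\sigma_j\ge\mu_j$ and the partial sums are well defined for every $r\ge 0$, including the trivial case $r=0$ where the bound reduces to $\mathcal{E}_0^2(M_K)=\|M_K\|_F^2$.
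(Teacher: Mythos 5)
Your proof of the central inequality is correct and essentially identical to the paper's: both combine Eckart--Young--Mirsky (giving $\mathcal{E}_r^2(M_K)=\|M_K\|_F^2-\sum_{j\le r}\sigma_j^2(M_K)$), Frobenius additivity across the concatenated blocks ($\|M_K\|_F^2=\sum_{i}\|A_i\|_F^2$), and the pointwise singular-value bound $\sigma_j(M_K)\ge\mu_j$ from Theorem~\ref{thm:global-incremental-lower-bound}, followed by substitution.

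One caveat on the heuristic you append for the ``monotonically decreasing'' remark: it actually runs in the wrong direction. Appending a block $A_{K+1}$ raises $\sum_i\|A_i\|_F^2$ by exactly $\|A_{K+1}\|_F^2$, whereas the head sum $\sum_{j\le r}\mu_j^2$ can increase by \emph{at most} $\|R_{K+1}\|_F^2$ --- the total squared residual energy added to $\widehat{R}$ is exactly $\|R_{K+1}\|_F^2$, and only part of it can enter the top-$r$ band --- and $\|R_{K+1}\|_F\le\|A_{K+1}\|_F$ since $R_{K+1}$ is an orthogonal projection of $A_{K+1}$. Hence the net change in the upper bound is nonnegative: it grows, or at best stays flat, as $K$ increases. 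So your claim that the head growth is ``at least'' $\|R_{K+1}\|_F^2$ should be ``at most,'' and even in your favorable regime the bound does not decrease in $K$. The paper's appendix proof covers only the displayed inequality and is silent on this remark, so this does not undermine your proof of the corollary's main claim, but the justification you offer for the remark is incorrect as stated.
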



To clarify when the residual-based bound of the Corollary~\ref{cor:residual-upper-error} is exact, informative, or potentially conservative, we characterize its behavior under different structural regimes of the
concatenated blocks in Appendix~\ref{appendix:residual-example}.

\begin{remark}[Near-tightness under weak subspace overlap]
The residual-based bound of Corollary~\ref{cor:residual-upper-error} becomes informative whenever each appended block contributes
a non-negligible component outside the span of previously concatenated matrices. In such regimes,
the leading singular values of the residual concatenation $\widehat{R}$ closely track those of the full
matrix $M_K$, and the resulting upper bound on the truncated SVD error is empirically tight.

Conversely, when newly appended blocks lie largely within an already spanned subspace, residual
energies are small and the bound may become conservative. This behavior reflects a fundamental
limitation of worst-case guarantees based solely on subspace innovation and is intrinsic to
concatenation-aware compression.
\end{remark}

\paragraph{Interpretation for clustering.}
For a cluster $C \subseteq \{1,\dots,K\}$, let $M_C := [A_j]_{j\in C}$ denote
its concatenated matrix. The residual-based construction processes the blocks
in $C$ sequentially: when a new block $A_j$ is added, we remove the component
already explained by the current cluster subspace and retain only the
\emph{residual} $R_j$, which captures directions not yet represented. The
singular values $\mu_1(M_C)\ge\cdots\ge\mu_r(M_C)$ of the concatenated residual
matrix $\widehat{R}_C$ therefore quantify the total energy of \emph{new
independent directions} introduced as the cluster grows.

Corollary~\ref{cor:residual-upper-error} implies the bound
\[
  \mathcal{E}_r^2(M_C)
  \;\le\;
  \sum_{j\in C} \|A_j\|_F^2 \;-\; \sum_{i=1}^r \mu_i^2(M_C),
\]
which serves as a \emph{merge certificate} for clustering: a block can be added
to a cluster if the resulting upper bound on the rank-$r$ approximation error
remains below a prescribed tolerance. In this way, clustering is driven by
whether newly added blocks introduce too much unexplained energy beyond the
current low-rank subspace.

Compared to the Weyl-based bound (Section~\ref{sec:weyl-bound}), which depends
primarily on the largest block, the residual-based bound aggregates the
\emph{innovation} contributed by all blocks. In particular, when multiple blocks
contain components in different (approximately orthogonal) directions, the
residual formulation captures their cumulative effect, leading to substantially
tighter estimates of the achievable low-rank approximation error. For the
algorithmic details, see Appendix~\ref{appendix:algo2}.

\subsection{Approximate Compression Bound via Incremental Truncated SVD}
\label{sec:approx-incremental}

The residual-based bound of Section~\ref{sec:residual-bounds} yields a
provable upper bound on the optimal rank-$r$ compression error, but it only adds singular values of new directions without updating the singular values of the old ones.
To obtain a more tight approximation, we maintain an orthonormal basis
$Q_t$ for the approximate dominant left singular subspace of
$M_t = [A_1,\dots,A_t]$, together with a small Gram matrix $S_t$ whose
eigenvalues track the dominant singular values.
See Appendix~\ref{appendix:incremental_svd} for more details.

Using these approximate singular values, we define a plug-in estimator of the
SVD compression error. See the proof of the following corollary in Appendix~\ref{appendix:proof-inc-cor}.

\begin{corollary}[Plug-in estimator of SVD compression error from incremental singular values]
\label{cor:plugin-error}
Let
\[
  M_K \;=\;
  \begin{bmatrix}
    A_1 & A_2 & \cdots & A_K
  \end{bmatrix}
  \in\mathbb{R}^{m\times (n_1+\cdots+n_K)},
\]
and define the total Frobenius energy of $M_K$ by
\[
  \|M_K\|_F^2
  \;=\;
  \sum_{i=1}^K \|A_i\|_F^2.
\]
Let $\widetilde{\sigma}_1(M_K)\ge \widetilde{\sigma}_2(M_K)\ge\cdots\ge 0$
denote the approximate singular values produced by the truncated incremental
scheme, i.e.\ the square roots
of the retained eigenvalues of the Gram matrix in the compressed basis.

For any target rank $r\ge 0$, define the optimal rank-$r$ SVD compression
error
\[
  \mathcal{E}_r(M_K)
  \;:=\;
  \min_{\operatorname{rank}(X)\le r} \|M_K - X\|_F
  \;=\;
  \Bigg(\sum_{j>r} \sigma_j^2(M_K)\Bigg)^{1/2},
\]
where $\sigma_1(M_K)\ge\sigma_2(M_K)\ge\cdots\ge 0$ are the true singular
values of $M_K$.
Then the quantity
\[
  \widetilde{\mathcal{E}}_r(M_K)
  \;:=\;
  \Bigg(\,
    \sum_{i=1}^K \|A_i\|_F^2
    \;-\;
    \sum_{j=1}^r \widetilde{\sigma}_j^2(M_K)
  \,\Bigg)^{1/2}
\]
is a natural plug-in estimator of $\mathcal{E}_r(M_K)$.  
In particular, if the incremental scheme is executed without any truncation (so that
$\widetilde{\sigma}_j(M_K) = \sigma_j(M_K)$ for all $j$), then
\[
  \widetilde{\mathcal{E}}_r(M_K)
  \;=\;
  \mathcal{E}_r(M_K).
\]
\end{corollary}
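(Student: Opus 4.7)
The proof reduces to combining three elementary identities, so the plan is mostly bookkeeping rather than a deep argument. First I would invoke the block decomposition of the Frobenius norm to rewrite the total energy of the concatenated matrix as a sum over blocks, namely
\[
  \|M_K\|_F^2 \;=\; \sum_{i=1}^K \|A_i\|_F^2,
\]
which holds because the Frobenius norm squares entrywise and horizontal concatenation simply partitions the entries. This identity turns the cumulative Frobenius energy appearing in $\widetilde{\mathcal{E}}_r(M_K)$ into a spectral quantity.

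Next, I would use the spectral identity $\|M_K\|_F^2 = \sum_{j\ge 1}\sigma_j^2(M_K)$, together with the Eckart–Young–Mirsky theorem (Appendix~\ref{appendix:preliminaries}), to rewrite the optimal rank-$r$ truncation error as
\[
  \mathcal{E}_r^2(M_K)
  \;=\; \sum_{j>r}\sigma_j^2(M_K)
  \;=\; \|M_K\|_F^2 \;-\; \sum_{j=1}^r \sigma_j^2(M_K).
\]
Substituting the block-wise expression for $\|M_K\|_F^2$ gives
\[
  \mathcal{E}_r^2(M_K)
  \;=\; \sum_{i=1}^K \|A_i\|_F^2 \;-\; \sum_{j=1}^r \sigma_j^2(M_K),
\]
which already exhibits the same algebraic shape as the definition of $\widetilde{\mathcal{E}}_r(M_K)$, with $\widetilde{\sigma}_j^2(M_K)$ replaced by the true $\sigma_j^2(M_K)$.

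To finish, I would appeal to the property of the untruncated incremental scheme, summarized in Appendix~\ref{appendix:incremental_svd}: when no basis truncation is performed, the maintained Gram matrix is unitarily equivalent to $M_t^\top M_t$ at every step, so its nonzero eigenvalues coincide with $\sigma_j^2(M_t)$. Hence $\widetilde{\sigma}_j(M_K) = \sigma_j(M_K)$ for all $j$, and direct substitution yields $\widetilde{\mathcal{E}}_r(M_K) = \mathcal{E}_r(M_K)$. The only substantive point, and the one I would state most carefully, is the invariance of the spectrum of the maintained Gram matrix under the incremental updates in the non-truncated regime; the remaining steps are purely algebraic identities. The label \emph{plug-in estimator} is then justified by construction, since replacing the true singular values by their incremental approximations in the exact formula for $\mathcal{E}_r(M_K)$ is precisely what $\widetilde{\mathcal{E}}_r(M_K)$ does.
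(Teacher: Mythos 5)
Your proposal is correct and follows essentially the same route as the paper: decompose $\|M_K\|_F^2$ blockwise, rewrite $\mathcal{E}_r^2(M_K)$ via Eckart--Young--Mirsky as $\|M_K\|_F^2 - \sum_{j\le r}\sigma_j^2(M_K)$, observe that $\widetilde{\mathcal{E}}_r$ is the same formula with $\widetilde{\sigma}_j$ substituted, and invoke Corollary~\ref{cor:incremental-truncated} (Appendix~\ref{appendix:incremental_svd}) to conclude exactness in the non-truncated case. The only nit is that the maintained Gram matrix tracks $M_tM_t^\top$, not $M_t^\top M_t$ as you wrote, though this is harmless since the two share the same nonzero eigenvalues.
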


To provide intuition for the behavior of the plug-in estimator in
Corollary~\ref{cor:plugin-error}, we first clarify how it is computed and used
within the clustering procedure. The estimator is obtained via a truncated
incremental scheme that processes the blocks of a cluster sequentially,
maintaining a rank-$r$ approximation that is updated and truncated after each
new block is appended. In the clustering algorithm, this estimate is used as a
\emph{merge criterion}: a block is added to a cluster if the predicted
rank-$r$ approximation error remains below a prescribed tolerance.

\begin{remark}[Order-dependence and lack of guarantees]
The plug-in estimator depends on the order in which blocks are appended to a
cluster, since the truncated incremental procedure maintains only a rank-$r$
approximation of the evolving subspace and discards lower-energy directions
after each update. When the dominant subspace evolves smoothly and newly
appended blocks contribute only small or well-aligned components, the estimator
is typically accurate and empirically close to the true rank-$r$ error.

However, the estimator does not provide a guaranteed bound. In particular,
directions that are weak when first observed may be removed by truncation and
later reappear with significant energy as additional blocks are added. In such
cases, the estimator may either overestimate or underestimate the true error,
depending on how the subspace evolves. This behavior is an intrinsic limitation
of incremental low-rank approximation under fixed-rank constraints.
\end{remark}

\paragraph{Interpretation and limitations.}
The plug-in estimator can be interpreted as tracking how much energy is
captured by a rank-$r$ subspace that is updated incrementally as blocks are
added. It provides a fast, matrix-free surrogate for the true truncation error,
enabling scalable clustering without forming the full concatenated matrix.

The estimator is most reliable when the dominant subspace of the cluster is
stable, i.e., when new blocks lie largely within the existing span or introduce
only low-energy orthogonal components. In this regime, the incremental subspace
remains well aligned with the true top-$r$ singular subspace, and the estimated
error closely matches the true approximation error.

The main limitation arises when truncation removes directions that later become
important. If a direction is discarded early because it carries little energy,
but is subsequently reinforced by later blocks, the incremental subspace may
fail to recover it. This can lead to \emph{underestimation} of the true error,
resulting in negative slack
$\widetilde{\mathcal{E}}_r(M_C) - \mathcal{E}_r(M_C) < 0$.
A concrete construction demonstrating this behavior is provided in
Appendix~\ref{appendix:approx-negative-slack}.

Overall, the plug-in estimator trades theoretical guarantees for substantial
computational efficiency, and should be viewed as a practical heuristic that
complements the provable max-norm and residual-based clustering strategies.
For algorithmic details, see Appendix~\ref{appendix:algo3}.

\section{Evaluation}
\label{sec:evaluation}

\begin{table}[h]
\centering
\begin{tabular}{lcccc}
\toprule
Dataset & \# Blocks & Original Shape & Final Shape & Size (GB) \\ 
\midrule
Qualcomm MIMO SCM & 2468 & $(2,20,4,50,32)$ & $(12800,20)$ & 2.35 \\
BigEarthNet-S1 & 10000 & $(120,120,2)$ & $(1440,20)$ & 1.07 \\
PDEBench (Advection) & 5000 & $(1024,201)$ & $(3072,67)$ & 3.83 \\
SmolVLM2 256M & 333760 & {--} & $(768, 1)$ & 0.95 \\
\bottomrule
\end{tabular}
\caption{Datasets used for evaluation (actual matrix shapes used in experiments).}
\label{tab:data-overview}
\end{table}

We evaluate the proposed clustering--compression framework on four datasets with fundamentally different generative structure and spectral properties: 
(i) massive MIMO channel tensors with high-dimensional complex-valued geometry~\citep{qualcomm2025mimo}, 
(ii) Sentinel-1 SAR satellite imagery~\citep{clasen2025reben}, 
(iii) PDE-generated advective flows~\citep{takamoto2022pdebench}, and 
(iv) SmolVLM2 256M model weights~\citep{marafioti2025smolvlm}. 
These domains exhibit markedly different spectrum decay rates and inter-block alignment, allowing us to assess the robustness of the proposed algorithms beyond a single application regime. All datasets are reshaped into collections of fixed-shape matrices prior to clustering. 
For BigEarthNet-S1 and PDEBench, we randomly sample blocks from the full datasets to obtain representative subsets of manageable size. 
For the Qualcomm MIMO dataset, we use a single batch of channel realizations provided by the official release. 
For SmolVLM2, whose weight tensors have heterogeneous shapes, we reshape each weight tensor into a fixed-length vector and treat each vector as an individual block, excluding bias parameters. 
The exact matrix shapes, number of blocks, and dataset statistics used in the experiments are reported in Table~\ref{tab:data-overview}.

We evaluate three clustering algorithms: (i) \textbf{max-norm clustering}, derived from the Weyl-type upper bound (Theorem~\ref{thm:weyl-upper-bound}, see Appendix~\ref{appendix:algo1}); (ii) \textbf{residual-based clustering}, evaluated with two sorting strategies: norm-descending and residual-ascending, based on Theorem~\ref{thm:global-incremental-lower-bound} (see Appendix~\ref{appendix:algo2}); (iii) \textbf{approximate incremental clustering}, also evaluated with both sorting strategies, derived from Corollary~\ref{cor:plugin-error} (see Appendix~\ref{appendix:algo3}). All algorithms depend on a target relative reconstruction error and a target approximation rank.

All experiments were run under identical hardware conditions: Intel Core i5-13600KF (20 threads) with 64\,GB RAM.

\paragraph{Reconstruction error.}
Let \(X \in \mathbb{R}^{M \times N}\) denote the concatenated matrix formed from all blocks assigned to a cluster, and let \(\hat X\) be its low-rank reconstruction after clustering and decoding.
We measure reconstruction error using the relative Frobenius norm error
\begin{equation}
\varepsilon_{\mathrm{rel}} \;=\;
\frac{\lVert X - \hat X \rVert_F}{\lVert X \rVert_F}.
\end{equation}
For the max-norm and residual-based clustering algorithms, Theorems~\ref{thm:weyl-upper-bound} and~\ref{thm:global-incremental-lower-bound} guarantee that the relative reconstruction error does not exceed the user-specified error constraint.
The approximate incremental algorithm does not provide a formal guarantee, however, no violations of the prescribed error thresholds were observed in any experiment. This empirical stability is consistent with the presence of strong spectral gaps in the tested datasets.

\paragraph{Compression ratio.}
Assume that each block has shape \((m,n)\) and that a cluster contains \(K\) blocks approximated with target rank \(r\).
The uncompressed representation stores \(Kmn\) parameters.
After compression, a shared basis requires \(mr\) parameters, while per-block coefficients require \(Knr\) parameters, resulting in a total of
\(
r(m + Kn)
\)
parameters.
The compression ratio is defined as the ratio between the number of parameters before and after compression.

\begin{figure}[tb]
\centering
\begin{tikzpicture}

\begin{groupplot}[
    group style={group size=2 by 1, horizontal sep=1.5cm},
    width=0.5\linewidth, 
    height=8cm,
    grid=both,
    tick label style={font=\small},
    label style={font=\small},
    title style={font=\small},
    yticklabel style={
      /pgf/number format/fixed,
      /pgf/number format/precision=2
  },
]

\nextgroupplot[
    title={(a) Slack vs cluster size},
    xlabel={Cluster size (\# blocks)},
    ylabel={Slack $\Delta$},
    ymax=0.15,
    mark size=2,
    legend style={
      at={(0.98,0.98)}, anchor=north east,
      font=\scriptsize,
      draw=black,
      fill=white
    }
]

\addplot[
  only marks,
  mark=*,
  color=black,
  mark size=2.5,
  opacity=0.35
]
table[col sep=comma, x=cluster_size, y expr=\thisrow{weyl_bound}-\thisrow{true_error}]
{smolvlm_error_estimations.csv};
\addlegendentry{max}

\addplot[only marks, mark=square*, color=black, mark size=2.5, draw=black, fill=black!65, opacity=0.35]
table[col sep=comma, x=cluster_size, y expr=\thisrow{residual_bound_norm}-\thisrow{true_error}]
{smolvlm_error_estimations.csv};
\addlegendentry{res.}

\addplot[only marks, mark=x, mark size=3, black, opacity=0.5]
table[col sep=comma, x=cluster_size, y expr=\thisrow{incremental_residual}-\thisrow{true_error}]
{smolvlm_error_estimations.csv};
\addlegendentry{approx.}

\nextgroupplot[
    title={(b) Slack distribution},
    xtick={1,2,3},
    xticklabels={max, res., approx.},
    ymax=0.15,
    xlabel={Clustering algorithms},
    boxplot/draw direction=y,
    boxplot/every box/.style={draw=black, fill=black!10},
    boxplot/every whisker/.style={draw=black},
    boxplot/every median/.style={draw=black, very thick},
]

\pgfplotsset{
  myoutliers/.style={only marks, mark=o, mark size=2.0, draw=black, fill=white}
}

\addplot+[myoutliers] coordinates {
(1,0.006793) (1,0.057738) (1,0.058599) (1,0.060575) (1,0.061303) (1,0.061641)
(1,0.062358) (1,0.062379) (1,0.062591) (1,0.077432) (1,0.079228) (1,0.080705)
(1,0.121771) (1,0.122556) (1,0.125713) (1,0.128373)
};

\addplot+[myoutliers] coordinates {
(2,0.006062) (2,0.036047) (2,0.048464) (2,0.049407) (2,0.050702) (2,0.050832)
(2,0.050877) (2,0.051372) (2,0.057928) (2,0.059781) (2,0.060884) (2,0.072306)
(2,0.125114) (2,0.125929) (2,0.128494) (2,0.128679)
};

\addplot+[myoutliers] coordinates {
(3,0.000032) (3,0.000053) (3,0.000053) (3,0.000066) (3,0.000106) (3,0.000107)
(3,0.000115) (3,0.000137) (3,0.000195) (3,0.000229) (3,0.000237) (3,0.000252)
(3,0.000263) (3,0.000289) (3,0.000329) (3,0.000335) (3,0.000339) (3,0.000373)
(3,0.000409) (3,0.000456) (3,0.000508) (3,0.000522) (3,0.000572) (3,0.000597)
(3,0.009010) (3,0.009036)
};

\addplot+[
  boxplot prepared={
    lower whisker=0.08397465999999998,
    lower quartile=0.09824926499999997,
    median=0.102811965,
    upper quartile=0.10849083749999999,
    upper whisker=0.12109140000000002
  },
  boxplot/draw position=1,
  black,
] coordinates {};

\addplot+[
  boxplot prepared={
    lower whisker=0.08246545999999999,
    lower quartile=0.09794475249999998,
    median=0.10254347499999994,
    upper quartile=0.10836161000000005,
    upper whisker=0.12311886999999999
  },
  boxplot/draw position=2,
  black,
] coordinates {};

\addplot+[
  boxplot prepared={
    lower whisker=0.003227559999999907,
    lower quartile=0.005286059999999926,
    median=0.006117340000000027,
    upper quartile=0.006693129999999964,
    upper whisker=0.00831223999999997
  },
  boxplot/draw position=3,
  black,
] coordinates {};

\end{groupplot}

\end{tikzpicture}
\caption{SmolVLM diagnostic of estimator conservativeness. Slack $\Delta \;=\;\widetilde{\mathcal{E}}_r - \mathcal{E}_r$ between predicted and true rank-$r$ SVD reconstruction error. For each cluster size and estimator, all 10 independent trials (uniform block samples) are shown. (a) slack versus cluster size; (b) empirical slack distribution across estimators.}
\label{fig:error-estimates}
\end{figure}

\paragraph{Estimator conservativeness diagnostic.}
In addition to end-to-end compression performance, we explicitly evaluate how conservative the proposed error estimators are relative to the true truncated SVD error.
For a fixed target rank~$r$, we uniformly sample subsets of blocks of increasing cluster size, form their concatenation, and compute the true optimal rank-$r$ reconstruction error $\mathcal E_r$ via an explicit SVD.
For each sampled subset, we also compute the corresponding predicted error $\tilde{\mathcal E}_r$ produced by the max-norm, residual-based, and approximate incremental estimators.
For each cluster size and estimator, this procedure is repeated over $10$ independent random trials with different block samples; \emph{all individual trial outcomes are shown} in Figure~\ref{fig:error-estimates}.
We report the resulting \emph{slack} $\Delta = \tilde{\mathcal E}_r - \mathcal E_r$, which directly measures estimator conservativeness.

As predicted by Theorems~\ref{thm:weyl-upper-bound}-\ref{thm:global-incremental-lower-bound}, the max-norm and residual-based estimators remain strictly conservative across all trials, exhibiting a consistently positive slack that is largely insensitive to cluster size.

In contrast, the approximate incremental estimator produces substantially smaller slack (often close to zero), indicating a much tighter but non-guaranteed estimate of the true error. We note that, although underestimation of the true error is possible in principle (see the formal construction in Appendix~\ref{appendix:approx-negative-slack}), we did not observe negative slack in these experiments, even under random sampling of blocks. A more detailed empirical analysis of slack distributions across datasets is provided in Appendix~\ref{app:slack-diagnostics}.

The observed dispersion across trials reflects variability induced by block selection while preserving a clear qualitative separation between estimators.

\paragraph{Clustering results.}

\begin{table}[t]
\centering
\begin{threeparttable}
\setlength{\tabcolsep}{5pt}
\begin{tabular}{l S S S S S}
\toprule
Method &
{Qualcomm} &
{BigEarthNet} &
{PDEBench} &
{SmolVLM2} &
{Wall time\tnote{$\dagger$}} \\
\midrule
max norm
& 2.138
& 1.000\tnote{*}
& 1.0037\tnote{*}
& 1.5811
& {$1\times$} \\

res.\ (norm sorting)
& 2.276
& 1.000\tnote{*}
& 1.0047\tnote{*}
& 1.5883
& {$2168\times$} \\

res.\ (res. sorting)
& 2.243
& 1.000\tnote{*}
& 1.0021\tnote{*}
& 1.5465
& {$4620\times$} \\

approx.\ (norm sorting)
& 2.301
& 1.789
& \bfseries 45.4341
& 1.6419
& {\textbf{100}$\times$} \\

approx.\ (res. sorting)
& \bfseries 2.334
& \bfseries 1.870
& \bfseries 45.4341
& \bfseries 1.8065
& {$1792\times$} \\
\bottomrule
\end{tabular}

\begin{tablenotes}[flushleft]
\footnotesize
\item[*] Clustering failed; no compression achieved (compression ratio $\approx 1.0$).
\item[$\dagger$] Wall time is reported relative to a baseline of 130\,ms.
\end{tablenotes}
\end{threeparttable}
\caption{Compression ratios across datasets and worst-case clustering wall time.
Wall-clock time for clustering (excluding decomposition) is reported relative to the max-norm method (higher is slower). Clustering were run with 5\% relative reconstruction error constraint for Qualcomm, BigEarthNet and PDEBench and with 20\% for SmolVLM. And target rank is fixed to 20 for Qualcomm and BigEarthNet, to 67 for PDEBench and to 32 for SmolVLM.}
\label{tab:compression-transposed}
\end{table}

Table~\ref{tab:compression-transposed} reports compression ratios and worst-case wall-clock times for the clustering stage only, excluding the cost of the final low-rank decompositions, across all datasets.
Entries marked with \(*\) indicate failure to achieve compression, defined as the inability to form clusters larger than individual blocks, resulting in compression ratios close to one.
This behavior is not caused by numerical instability, but arises when conservative error bounds prevent aggregation of blocks into shared low-rank subspaces.

The results reveal a clear trade-off between computational efficiency, compression performance, and theoretical guarantees.
Max-norm clustering is consistently the fastest method, but yields conservative clustering due to loose worst-case bounds, limiting achievable compression.
Residual-based clustering enforces strict reconstruction guarantees and produces stable results when feasible, but incurs several orders of magnitude higher runtime.
Approximate incremental clustering achieves the highest compression, while remaining substantially faster than exact residual-based methods.

\paragraph{Downstream implications.}
While our primary focus is on reconstruction error, we additionally evaluate how compression affects a simple downstream forecasting task on PDEBench.
The results (see Appendix~\ref{app:downstream-pdebench}) show that low reconstruction error correlates with strong downstream performance up to a sharp degradation threshold, and that naive full concatenation can lead to catastrophic performance loss despite high compression.

\paragraph{Hyperparameter sensitivity.}

\begin{figure}[t]
\centering

\begin{subfigure}[t]{0.49\linewidth}
\centering
\begin{tikzpicture}
\begin{groupplot}[
  group style={group size=1 by 1},
  width=\linewidth,
  height=4.2cm,
  grid=major,
  grid style={gray!25,dashed},
  axis line style={black!70},
  tick style={black!70},
  yticklabel style={font=\scriptsize},
  scaled x ticks=false,
  xticklabel style={
      /pgf/number format/fixed,
      /pgf/number format/precision=2
  },
  ylabel={Compression rate},
]
\nextgroupplot[
  xlabel={Error constraint $\varepsilon$},
  xmin=0.01, xmax=0.10,
  ymin=1, ymax=5,
  xtick={0.01,0.02,0.03,0.04,0.05,0.06,0.07,0.08,0.09,0.10},
  xticklabel style={font=\scriptsize},
]
\addplot[
  color=black,
  mark=*,
  mark size=2.5pt,
  thick,
] coordinates {
  (0.01, 1.1721)
  (0.02, 1.4012)
  (0.03, 1.6611)
  (0.04, 1.9821)
  (0.05, 2.3339)
  (0.06, 2.7040)
  (0.07, 3.1443)
  (0.08, 3.6077)
  (0.09, 4.0054)
  (0.10, 4.4770)
};
\end{groupplot}
\end{tikzpicture}
\caption{Qualcomm: sweep $\varepsilon$, fixed $r=20$.}
\label{fig:qualcomm-cr-a}
\end{subfigure}
\hfill
\begin{subfigure}[t]{0.49\linewidth}
\centering
\begin{tikzpicture}
\begin{groupplot}[
  group style={group size=1 by 1},
  width=\linewidth,
  height=4.2cm,
  grid=major,
  grid style={gray!25,dashed},
  axis line style={black!70},
  tick style={black!70},
  yticklabel style={font=\scriptsize},
  scaled x ticks=false,
  xticklabel style={
      /pgf/number format/fixed,
      /pgf/number format/precision=2
  },
  ylabel={Compression rate},
]
\nextgroupplot[
  xlabel={Target rank $r$},
  xmin=5, xmax=50,
  ymin=1, ymax=3,
  xtick={5,10,15,20,25,30,35,40,45,50},
]
\addplot[
  color=black,
  mark=square*,
  mark size=2.5pt,
  thick,
] coordinates {
  (5, 1.0641)
  (10, 1.4491)
  (15, 1.9058)
  (20, 2.3339)
  (25, 2.4775)
  (30, 2.6905)
  (35, 2.7878)
  (40, 2.5584)
  (45, 2.7173)
  (50, 2.7782)
};
\end{groupplot}
\end{tikzpicture}
\caption{Qualcomm: sweep $r$, fixed $\varepsilon = 0.05$.}
\label{fig:qualcomm-cr-b}
\end{subfigure}

\vspace{0.6em}

\begin{subfigure}[t]{0.49\linewidth}
\centering
\begin{tikzpicture}
\begin{groupplot}[
  group style={group size=1 by 1},
  width=\linewidth,
  height=4.2cm,
  grid=major,
  grid style={gray!25,dashed},
  axis line style={black!70},
  tick style={black!70},
  yticklabel style={font=\scriptsize},
  scaled x ticks=false,
  xticklabel style={
      /pgf/number format/fixed,
      /pgf/number format/precision=2
  },
  ylabel={Compression rate},
]
\nextgroupplot[
  xlabel={Error constraint $\varepsilon$},
  xmin=0.05, xmax=0.5,
  ymin=1, ymax=5,
  xtick={0.05,0.1,0.15,0.2,0.25,0.3,0.35,0.4,0.45,0.5},
  xticklabel style={font=\scriptsize},
]
\addplot[
  color=black,
  mark=*,
  mark size=2.5pt,
  thick,
] coordinates {
  (0.05, 1.1721)
  (0.1, 1.4012)
  (0.15, 1.6611)
  (0.2, 1.9821)
  (0.25, 2.3339)
  (0.3, 2.7040)
  (0.35, 3.1443)
  (0.4, 3.6077)
  (0.45, 4.0054)
  (0.5, 4.4770)
};
\end{groupplot}
\end{tikzpicture}
\caption{SmolVLM2: sweep $\varepsilon$, fixed $r=32$.}
\label{fig:smolvlm-cr-a}
\end{subfigure}
\hfill
\begin{subfigure}[t]{0.49\linewidth}
\centering
\begin{tikzpicture}
\begin{groupplot}[
  group style={group size=1 by 1},
  width=\linewidth,
  height=4.2cm,
  grid=major,
  grid style={gray!25,dashed},
  axis line style={black!70},
  tick style={black!70},
  yticklabel style={font=\scriptsize},
  scaled x ticks=false,
  xticklabel style={
      /pgf/number format/fixed,
      /pgf/number format/precision=2
  },
  ylabel={Compression rate},
]
\nextgroupplot[
  xlabel={Target rank $r$},
  xmode=log,
  log basis x=2,
  xmin=2, xmax=1024,
  ymin=1, ymax=1.8,
  xtick={2,4,8,16,32,64,128,256,512,1024},
]
\addplot[
  color=black,
  mark=square*,
  mark size=2.5pt,
  thick,
] coordinates {
  (2, 1.5678)
  (4, 1.5720)
  (8, 1.6064)
  (16, 1.6596)
  (32, 1.6419)
  (64, 1.5701)
  (128, 1.4297)
  (256, 1.2917)
  (512, 1.1240)
  (1024, 1)
};
\end{groupplot}
\end{tikzpicture}
\caption{SmolVLM2: sweep $r$, fixed $\varepsilon = 0.2$.}
\label{fig:smolvlm-cr-b}
\end{subfigure}

\caption{Compression rates under feasibility constraints.
(a) Qualcomm dataset: fixed target rank and varying error constraint.
(b) Qualcomm dataset: fixed error constraint and varying target rank.
(c) SmolVLM2 model weights: fixed target rank and varying error constraint.
(d) SmolVLM2 model weights: fixed error constraint and varying target rank.}
\label{fig:compression-rates-4panels}
\end{figure}

We analyze the dependence of compression performance on the target reconstruction error and target rank.
Figure~\ref{fig:compression-rates-4panels} shows results for the Qualcomm MIMO dataset using approximate clustering with residual-based sorting, and reports results for SmolVLM2 weights using approximate clustering with norm-based sorting. Across both datasets, compression increases approximately linearly with the allowed reconstruction error, consistent with low-rank approximation theory.
In contrast, the dependence on target rank is non-monotonic: increasing rank does not necessarily improve compression.
This behavior reflects the interaction between rank selection and inter-block subspace alignment, highlighting that rank is not merely a capacity parameter but must be chosen carefully to balance expressivity and shared structure.

\paragraph{Rank selection.}
The target rank \( r \) plays a dual role in our framework: for a fixed cluster it controls truncation error, while simultaneously influencing the clustering outcome by determining which merges satisfy the error constraint. As a result, the overall compression ratio is generally \emph{non-monotonic} in \( r \), since varying \( r \) changes the feasible set of cluster assignments rather than only refining a fixed approximation.
Crucially, our approach does not require access to the singular values of the concatenated matrices. Instead, rank selection relies on the same surrogate quantities used for clustering decisions. In practice, we treat \( r \) as a hyperparameter and evaluate a small set of candidate values, selecting the configuration that achieves the best compression under the prescribed error constraint.

\paragraph{Classical clustering baselines.}

\begin{table}[t]
\centering
\setlength{\tabcolsep}{6pt}
\begin{tabular}{l S l l}
\toprule
Method
& {Compression}
& {Rel.\ error}
& {Outcome} \\
\midrule
Random clustering ($K{=}10$)
& {178.1}
& 0.704 $\pm$ 0.034
& High compression and error \\

Random clustering ($K{=}100$)
& {23.8}
& 0.363 $\pm$ 0.110
& Moderate compression, high error \\

Random clustering ($K{=}1000$)
& {2.46}
& 0.199 $\pm$ 0.134
& Low compression, moderate error \\

\midrule
k-means ($K{=}10$)
& {178.3}
& 0.886
& High compression and error \\

k-means ($K{=}100$)
& {23.8}
& 0.322 $\pm$ 0.343
& High unstable error \\

k-means ($K{=}1000$)
& {2.46}
& 0.881 $\pm$ 0.098
& High error \\

\midrule
HDBSCAN (min cluster size $=2$)
& {--}
& {--}
& All points labeled as outliers \\

\midrule
\textbf{Ours (approx.\ residual)}
& 2.334
& \textbf{0.044 $\pm$ 0.006}
& Low stable error
\\
\bottomrule
\end{tabular}
\caption{Classical clustering baselines and proposed approximate method on the Qualcomm dataset.}
\label{tab:classical-clustering}
\end{table}

We additionally compare against baseline clustering methods on the Qualcomm dataset.
Table~\ref{tab:classical-clustering} reports results for random clustering and k-means with varying numbers of clusters and for HDBSCAN. These methods are fundamentally misaligned with the concatenated SVD compression objective.
Classical clustering algorithms optimize geometric distortion in the ambient space and provide no mechanism to control spectral reconstruction error after low-rank decoding.
As a result, high compression is achieved only at the cost of unacceptable and highly unstable reconstruction error, while density-based clustering fails entirely by labeling all points as outliers.
This confirms that standard clustering techniques are unsuitable for controlled low-rank compression of concatenated matrices.

\section{Conclusion}

We presented a theory-driven framework for compression-aware clustering of matrix collections under explicit SVD reconstruction constraints.
By analyzing the spectral behavior of horizontally concatenated matrices, we derived global upper bounds on truncated SVD reconstruction error from lower bounds on singular value growth.
These results enable principled decisions about which matrices can be safely grouped and compressed together.

Building on this theory, we proposed three clustering algorithms that span a spectrum of trade-offs between computational efficiency, tightness of guarantees, and scalability.
The max-norm method provides a fast but conservative baseline, while the residual-based method yields a substantially tighter and fully deterministic certificate by explicitly tracking subspace innovation across blocks.
Although computationally more expensive, the residual-based approach serves as a reference method for guaranteed compression and enables merges that cannot be justified by worst-case bounds alone.
In contrast, the approximate incremental estimator achieves significantly higher compression and efficiency in practice, but does not provide formal guarantees: as we demonstrate, it may in principle underestimate the true error due to truncation effects, even though such behavior was not observed empirically in our experiments.
Unlike existing heuristic or distance-based approaches, the proposed methods directly tie cluster formation to low-rank approximation error, providing explicit and interpretable accuracy control.

Together, these contributions establish concatenation-aware SVD compression as a unifying perspective on matrix clustering and low-rank approximation, clarifying the interplay between guarantees and practical performance, and providing both theoretical foundations and scalable algorithms for compressing large collections of matrices in modern machine learning and scientific computing pipelines.

\paragraph{Discussion and future work.}

Throughout this work, the target rank is treated as a fixed hyperparameter.
In practice, the optimal choice of \( r \) depends on the interaction between truncation error and cluster formation, and may vary across clusters.
Importantly, the overall compression is generally non-monotonic in \( r \), since changing \( r \) alters not only the approximation within each cluster but also which merges satisfy the error constraint.
A principled approach to selecting \( r \) without solving the full clustering problem remains an open question.
In particular, joint optimization of rank selection and clustering, with \( r \) adapting during merging, is a promising direction for future work.

The present framework operates in an offline setting where all matrices are available in advance.
In many applications, however, new data arrive sequentially.
Extending compression-aware clustering to an incremental setting, where each new matrix must be assigned to an existing cluster or used to initialize a new cluster based on predicted SVD compression error, is a natural and practically important direction for future work.
Such an extension would require efficient per-cluster error estimation and dynamic cluster management under streaming updates.

The current framework measures reconstruction quality using the Frobenius norm.
This choice is motivated by the compression objective: for truncated SVD, the Frobenius error corresponds to the total discarded spectral energy, which admits an additive interpretation across singular directions and aligns naturally with memory--distortion trade-offs.
In contrast, spectral norm controls only the largest residual singular value and therefore captures worst-case directional error rather than aggregate reconstruction quality.
As a result, two clusterings may exhibit similar spectral error while differing substantially in total reconstruction fidelity.
Moreover, the proposed clustering criteria rely on energy-based certificates that accumulate contributions across multiple directions, which are inherently tied to Frobenius-type quantities.
Extending the framework to alternative norms, such as spectral norm, would require different merge certificates that directly control the $(r+1)$-th singular value of concatenated matrices, and is left for future work.

Beyond pure reconstruction, low-rank representations are often used as intermediate features for downstream tasks.
In such settings, reconstruction error may not fully capture task-relevant information.
Incorporating task-aware objectives into compression-driven clustering while retaining spectral guarantees remains an open and challenging problem.

Finally, we note that the approximate incremental estimator is closely related to randomized sketching and streaming low-rank approximation methods.
In principle, it could be replaced by a randomized SVD or range-finder approach, yielding $(1+\varepsilon)$-approximate rank-$r$ reconstructions in Frobenius norm with high probability and near-linear computational cost.
In the clustering setting, this would lead to \emph{probabilistic} merge criteria and end-to-end guarantees that hold with high probability, in contrast to the deterministic worst-case guarantees provided by the residual-based method.
However, randomized approximations do not naturally yield conservative upper bounds on the residual energy, which are required to ensure safe merges under a prescribed error tolerance.
Designing sketch-based clustering procedures that preserve certificate-based guarantees while improving computational efficiency is an interesting direction for future work.

\subsubsection*{Acknowledgments}
The authors confirm that there is no conflict of interest and acknowledges financial support by the Simons Foundation grant (SFI-PD-Ukraine-00014586, M.S.) and the
project 0125U000299 of the National Academy of Sciences of Ukraine. We also express our gratitude to the Armed Forces of Ukraine for their protection, which has made this research possible.

\bibliography{main}
\bibliographystyle{tmlr}

\appendix

\section{Preliminaries}
\label{appendix:preliminaries}

This section summarizes the mathematical tools used throughout the paper.  
Our theoretical results rely on two classical components of matrix analysis:  
(i) perturbation inequalities for eigenvalues and singular values, and  
(ii) optimality properties of truncated SVD.  
We also introduce the notation and identities underlying our incremental 
representation of concatenated matrices.

\subsection{Spectral Monotonicity and Perturbation Bounds}

A key component in our analysis is the fact that appending a matrix block
to an existing concatenation can only \emph{increase} its singular values.
This follows from a classical form of Weyl’s monotonicity theorem for
Hermitian matrices.  Letting $M=[A_1,\dots,A_t]$ and 
$M'=[A_1,\dots,A_t,A_{t+1}]$, we have
$M'M'^\top = MM^\top + A_{t+1}A_{t+1}^\top$, so the update is positive
semidefinite.

\begin{theorem}[Weyl Monotonicity; cf.\ Cor.~4.9 in~\citet{stewart_sun1990}]
\label{thm:weyl-monotonicity}
Let $A,E\in\mathbb{R}^{n\times n}$ be Hermitian and write their eigenvalues in
nonincreasing order,
\[
  \lambda_1(A)\ge \cdots \ge \lambda_n(A), 
  \qquad
  \lambda_1(E)\ge \cdots \ge \lambda_n(E).
\]
Let $\tilde{\lambda}_1\ge\cdots\ge\tilde{\lambda}_n$ be the eigenvalues of 
$A+E$.  
Then, for all $i=1,\dots,n$,
\[
  \tilde{\lambda}_i \in 
  \bigl[
    \lambda_i(A)+\lambda_n(E),\,
    \lambda_i(A)+\lambda_1(E)
  \bigr].
\]
In particular, if $E\succeq 0$, then
\[
  \tilde{\lambda}_i \ge \lambda_i(A),
  \qquad i=1,\dots,n.
\]
\end{theorem}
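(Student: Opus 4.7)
The plan is to establish the two-sided interval by combining the Courant-Fischer min-max characterization of Hermitian eigenvalues with the uniform Rayleigh bound for the perturbation $E$. First, I would recall that for any Hermitian $H \in \mathbb{R}^{n \times n}$, the $i$-th eigenvalue in nonincreasing order admits the variational representation
\[
  \lambda_i(H) \;=\; \max_{\substack{S \subseteq \mathbb{R}^n \\ \dim S = i}} \; \min_{\substack{x \in S \\ \|x\|=1}} x^\top H x,
\]
and apply this characterization to both $A$ and $\tilde A := A + E$.

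Next, I would exploit additivity of the quadratic form: for every unit vector $x$, we have $x^\top \tilde A x = x^\top A x + x^\top E x$, and by the Rayleigh-Ritz bounds applied to the Hermitian matrix $E$,
\[
  \lambda_n(E) \;\le\; x^\top E x \;\le\; \lambda_1(E)
\]
uniformly in $x$. The key step is that this bound on the perturbation term is independent of $x$, so for any subspace $S$ the additive shift factors cleanly out of the inner minimization,
\[
  \min_{\|x\|=1,\, x \in S} x^\top A x \;+\; \lambda_n(E) \;\le\; \min_{\|x\|=1,\, x \in S} x^\top \tilde A x \;\le\; \min_{\|x\|=1,\, x \in S} x^\top A x \;+\; \lambda_1(E).
\]

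Then I would take the maximum of these three expressions over $i$-dimensional subspaces $S$. Since $\lambda_n(E)$ and $\lambda_1(E)$ are constants independent of $S$, they pass through the outer maximum, giving $\lambda_i(A) + \lambda_n(E) \le \tilde\lambda_i \le \lambda_i(A) + \lambda_1(E)$, which is the claimed interval. The PSD consequence then follows immediately: if $E \succeq 0$, then $\lambda_n(E) \ge 0$, so the lower bound collapses to $\tilde\lambda_i \ge \lambda_i(A)$.

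The main obstacle is essentially bookkeeping rather than conceptual, namely verifying that the uniform-in-$x$ Rayleigh shift commutes with both the inner minimum and the outer maximum in the min-max formula; this is where the argument would collapse if $E$ were allowed to depend on $x$ nontrivially. A minor care point is to use the \emph{same} min-max convention (nonincreasing eigenvalue ordering with $\max$-outside-$\min$, $i$-dimensional subspaces) for both $A$ and $\tilde A$ so that the indices on the two sides align correctly; using the dual $\min$-outside-$\max$ form would work identically but must not be mixed with the first within a single chain.
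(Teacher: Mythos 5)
Your proof is correct. The paper itself does not prove this theorem --- it cites it as Corollary~4.9 of Stewart and Sun (1990) and takes it as a known classical result --- so there is no ``paper proof'' to compare against. The Courant--Fischer route you take (additivity of the Rayleigh quotient plus the uniform-in-$x$ bound $\lambda_n(E)\le x^\top E x\le\lambda_1(E)$, pushed through the inner $\min$ and the outer $\max$) is the standard textbook argument and is exactly the kind of proof the cited reference gives. Your closing caution about using a consistent min-max convention is well placed; the only other thing worth making fully explicit is the monotonicity fact that pointwise inequality of functions is preserved under both infimum and supremum, which is what lets the constant shift commute with both extremizations.
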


Since the singular values of a matrix $M$ are the square roots of the
eigenvalues of $MM^\top$, the theorem immediately yields monotonicity of
singular values under horizontal concatenation.

\subsection{Optimality of Truncated SVD}

We frequently use the classical Eckart--Young--Mirsky theorem, which states
that the best rank-$r$ approximation of a matrix in Frobenius norm is obtained
by truncating its singular value decomposition.

\begin{theorem}[Eckart--Young--Mirsky~\citep{EckartYoung1936, mirsky1960}]
\label{thm:eym}
Let $A\in\mathbb{R}^{m\times n}$ have singular values
$\sigma_1(A)\ge\sigma_2(A)\ge\cdots\ge 0$, 
and let
\[
  A = U \Sigma V^\top,
  \qquad
  \Sigma = \operatorname{diag}(\sigma_1,\dots,\sigma_p),
  \quad
  p=\min\{m,n\}.
\]
For $r\in\{0,\dots,p\}$, define the rank-$r$ truncated SVD
\[
A_r := U
\begin{bmatrix}
  \operatorname{diag}(\sigma_1,\dots,\sigma_r) & 0\\
  0 & 0
\end{bmatrix}
V^\top.
\]
Then $A_r$ is the best rank-$r$ approximation to $A$ in the Frobenius norm:
\[
  \|A - A_r\|_F
  = \min_{\operatorname{rank}(X)\le r} \|A - X\|_F
  = \Bigg(\sum_{j>r} \sigma_j(A)^2\Bigg)^{1/2}.
\]
\end{theorem}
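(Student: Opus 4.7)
The plan is to establish Eckart--Young--Mirsky in two stages: first verify that the truncated SVD $A_r$ attains the stated Frobenius error, then show that no rank-$r$ matrix can do better. Both stages rely crucially on the unitary invariance of the Frobenius norm, namely $\|UBV^\top\|_F = \|B\|_F$ for any orthogonal $U,V$ of compatible size.

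For the attainment direction, I would write
\[
A - A_r
\;=\; U\,\operatorname{diag}(0,\ldots,0,\sigma_{r+1},\ldots,\sigma_p)\,V^\top
\]
directly from the SVD and the definition of $A_r$, and apply unitary invariance to read off $\|A - A_r\|_F^2 = \sum_{j>r}\sigma_j^2(A)$. This step is essentially bookkeeping and presents no real difficulty.

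For the optimality direction, the key ingredient is a pointwise singular value comparison between $A$ and $A - X$ whenever $\operatorname{rank}(X)\le r$. I would invoke Weyl's additive singular value inequality $\sigma_{i+j-1}(B+C)\le \sigma_i(B)+\sigma_j(C)$ with $B = A - X$ and $C = X$. Setting $j = r+1$ eliminates the $X$ contribution because $\sigma_{r+1}(X) = 0$, yielding $\sigma_{k+r}(A) \le \sigma_k(A-X)$ for every $k\ge 1$. Squaring and summing over $k$ then gives $\|A - X\|_F^2 \ge \sum_{j>r}\sigma_j^2(A)$, matching the bound attained in the first stage and closing the argument.

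The main obstacle is justifying Weyl's additive singular value inequality if one insists on a self-contained proof rather than a citation. The cleanest route is the Hermitian dilation trick: the nonzero singular values of any rectangular $M$ coincide with the positive eigenvalues of its symmetric block augmentation, so the additive bound for singular values reduces to the additive eigenvalue version of Weyl's inequality, which in turn follows from the Courant--Fischer min-max characterization of eigenvalues of Hermitian matrices. Since Theorem~\ref{thm:weyl-monotonicity} is already available in the preliminaries, the entire optimality argument stays within the paper's declared toolset, and only the dilation identity needs to be spelled out.
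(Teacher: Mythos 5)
The paper does not prove this theorem: Eckart--Young--Mirsky is stated in the appendix as a classical result and cited to Eckart--Young (1936) and Mirsky (1960), then used as a black box throughout. So there is no in-paper proof for your argument to match. What you give is a correct and standard proof of the Frobenius-norm version: the attainment step follows from unitary invariance and direct inspection of $A-A_r$, and the optimality step correctly reduces to the Weyl--Mirsky singular value inequality $\sigma_{i+j-1}(B+C)\le\sigma_i(B)+\sigma_j(C)$ applied with $j=r+1$, which kills the contribution of $X$ since $\operatorname{rank}(X)\le r$ forces $\sigma_{r+1}(X)=0$.

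The one thing to correct is your final claim that the optimality step ``stays within the paper's declared toolset'' via Theorem~\ref{thm:weyl-monotonicity}. That theorem is only the $j=1$ (and $j=n$) slice of Weyl's additive inequality: it gives $\lambda_i(H+G)\le\lambda_i(H)+\lambda_1(G)$, not the general two-index form $\lambda_{i+j-1}(H+G)\le\lambda_i(H)+\lambda_j(G)$ that your argument needs after the Hermitian dilation. The dilation trick is fine, but it reduces the singular-value inequality to the \emph{full} Weyl eigenvalue inequality, not to the special case the paper actually states. You would need to either prove the general form via Courant--Fischer (taking an $i$-dimensional subspace for $H$ and the complement of a $(j-1)$-dimensional one for $G$, then intersecting) or cite it separately; it is not derivable from Theorem~\ref{thm:weyl-monotonicity} alone. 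This is a presentation gap rather than a flaw in the mathematics, since the full Weyl inequality is classical and the rest of your argument is sound.
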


This theorem underlies all of our compression error formulas.

\subsection{Incremental Representation of Concatenated Matrices}

Let $M_t = [A_1,\dots,A_t]$ denote the concatenation of $t$ blocks.  
When a new block $A_{t+1}$ is appended, the Gram matrix updates as
\[
  M_{t+1}M_{t+1}^\top
  = M_t M_t^\top + A_{t+1}A_{t+1}^\top,
\]
which is a rank-$\le n_{t+1}$ positive semidefinite perturbation.
Instead of storing the full $m\times m$ matrix $M_tM_t^\top$, we maintain
an orthonormal basis $Q_t$ for the current column space of $M_t$ and pose
$M_tM_t^\top$ by this basis via a small Gram matrix $S_t$, i.e.,
\[
  M_tM_t^\top = Q_t S_t Q_t^\top.
\]

When $A_{t+1}$ arrives, we decompose it by $Q_t$ and the residual orthogonal to $Q_t$, as follows
\[
  A_{t+1}
  = Q_t Y_{t+1} + R_{t+1},
  \qquad
  R_{t+1} = (I - Q_tQ_t^\top)A_{t+1}.
\]
Expanding the basis by the columns of $R_{t+1}$ yields an updated orthonormal
matrix $Q_{t+1}$, and the new Gram matrix $S_{t+1}$ takes a block form
constructed from $S_t$, $Y_{t+1}$, and the SVD of $R_{t+1}$.  
This identity, stated formally in Lemma~\ref{lem:incremental-gram} and proved 
in Appendix~\ref{appendix:incremental_svd}, is exact and well known in the incremental PCA/SVD literature.  
It provides the algebraic foundation for our blockwise concatenation analysis.

After expanding the basis and updating $S_{t+1}$, its size grows by 
the rank of $R_{t+1}$.  
To control complexity, we compress back to the rank $r$ by retaining only the top 
$r$ eigenpairs of $S_{t+1}$.  
By Theorem~\ref{thm:eym}, this produces the optimal rank-$r$ approximation
\emph{within the expanded subspace}.  
The approximate singular values of $M_{t+1}$ are then given by
the square roots of the retained eigenvalues.

The only source of approximation in our incremental estimator is this 
truncation step, all other steps are exact.  
A full characterization of the truncation and its implications is provided in 
Corollary~\ref{cor:incremental-truncated}.

\paragraph{Stability of the incremental estimator.}
Although the truncated incremental scheme does not provide deterministic upper or lower bounds on the true truncated SVD error, its approximation quality is governed by classical subspace stability principles. In particular, when the singular value gap $\sigma_r(M_t) - \sigma_{r+1}(M_t)$ is sufficiently large, truncation preserves the dominant invariant subspace up to small perturbations, and the retained eigenvalues of the compressed Gram matrix remain close to the true leading singular values. This behavior is well understood in incremental and randomized SVD literature, where approximation error scales with the truncation gap and the energy of discarded components.

\section{Incremental SVD identities}
\label{appendix:incremental_svd}
This appendix contains the algebraic identities used in Section~\ref{sec:approx-incremental} to derive
our incremental estimator for the top singular values of a concatenated
matrix.  These results are classical in incremental PCA/SVD, but we provide
them for completeness and to make the paper self-contained.

\begin{lemma}[Exact incremental Gram factorisation for concatenated blocks]
\label{lem:incremental-gram}
Let $M_t = [A_1,\dots,A_t] \in \mathbb{R}^{m\times n_t}$ be the horizontal
concatenation of the first $t$ blocks, and suppose that for some $r_t$ we
have an exact factorisation
\[
  M_t M_t^\top \;=\; Q_t S_t Q_t^\top,
\]
where $Q_t \in \mathbb{R}^{m\times r_t}$ has orthonormal columns and
$S_t \in \mathbb{R}^{r_t\times r_t}$ is symmetric positive semidefinite.
Let a new block $A_{t+1}\in\mathbb{R}^{m\times k}$ be given and define
\[
  Y := Q_t^\top A_{t+1}, \qquad
  R := A_{t+1} - Q_t Y.
\]
Compute a thin QR decomposition of the residual,
\[
  R = Q_{\mathrm{res}} B,
\]
with $Q_{\mathrm{res}}\in\mathbb{R}^{m\times r_{\mathrm{res}}}$ having
orthonormal columns and $B\in\mathbb{R}^{r_{\mathrm{res}}\times k}$, and set
\[
  Q_{t+1} :=
  \begin{bmatrix}
    Q_t & Q_{\mathrm{res}}
  \end{bmatrix}
  \in\mathbb{R}^{m\times (r_t + r_{\mathrm{res}})}.
\]
Define the extended Gram matrix
\[
  S_{t+1} :=
  \begin{bmatrix}
    S_t + Y Y^\top & Y B^\top\\[0.3em]
    B Y^\top       & B B^\top
  \end{bmatrix}
  \in\mathbb{R}^{(r_t + r_{\mathrm{res}})\times (r_t + r_{\mathrm{res}})}.
\]
Then $Q_{t+1}$ has orthonormal columns and
\[
  M_{t+1} M_{t+1}^\top
  \;=\;
  Q_{t+1} S_{t+1} Q_{t+1}^\top,
\]
where $M_{t+1} := [M_t, A_{t+1}]$.
\end{lemma}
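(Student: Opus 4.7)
The plan is to verify the two claims (orthonormality of $Q_{t+1}$ and the factorisation identity) by direct algebraic expansion, exploiting the fact that the residual $R$ is orthogonal to $Q_t$ by construction.

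\textbf{Step 1: Orthonormality of $Q_{t+1}$.}
First I would show $Q_t^\top Q_{\mathrm{res}} = 0$. By definition $R = A_{t+1} - Q_t Y = A_{t+1} - Q_t Q_t^\top A_{t+1} = (I - Q_t Q_t^\top) A_{t+1}$, so $Q_t^\top R = 0$. Since $Q_{\mathrm{res}}$ arises from the thin QR of $R$, its columns span $\operatorname{range}(R)$, hence $Q_t^\top Q_{\mathrm{res}} = 0$. Combined with $Q_t^\top Q_t = I_{r_t}$ and $Q_{\mathrm{res}}^\top Q_{\mathrm{res}} = I_{r_{\mathrm{res}}}$, a block computation of $Q_{t+1}^\top Q_{t+1}$ yields the identity of appropriate size.

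\textbf{Step 2: Decomposing $A_{t+1}$ in the new basis.}
From the definitions $A_{t+1} = Q_t Y + R$ and $R = Q_{\mathrm{res}} B$, one obtains
\[
  A_{t+1} \;=\; Q_t Y + Q_{\mathrm{res}} B
  \;=\;
  \begin{bmatrix} Q_t & Q_{\mathrm{res}} \end{bmatrix}
  \begin{bmatrix} Y \\ B \end{bmatrix}
  \;=\;
  Q_{t+1} \begin{bmatrix} Y \\ B \end{bmatrix}.
\]
Multiplying by its transpose gives
\[
  A_{t+1} A_{t+1}^\top
  \;=\;
  Q_{t+1}
  \begin{bmatrix}
    Y Y^\top & Y B^\top \\
    B Y^\top & B B^\top
  \end{bmatrix}
  Q_{t+1}^\top.
\]

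\textbf{Step 3: Lifting $M_t M_t^\top$ to the enlarged basis.}
Since $Q_{t+1}$ extends $Q_t$ by appending new columns, the assumed factorisation can be rewritten as
\[
  M_t M_t^\top
  \;=\;
  Q_t S_t Q_t^\top
  \;=\;
  Q_{t+1}
  \begin{bmatrix} S_t & 0 \\ 0 & 0 \end{bmatrix}
  Q_{t+1}^\top,
\]
where the zero blocks have the dimensions of the $Q_{\mathrm{res}}$ component.

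\textbf{Step 4: Summation and conclusion.}
Since concatenation satisfies $M_{t+1} M_{t+1}^\top = M_t M_t^\top + A_{t+1} A_{t+1}^\top$, adding the two expressions above yields
\[
  M_{t+1} M_{t+1}^\top
  \;=\;
  Q_{t+1}
  \begin{bmatrix}
    S_t + Y Y^\top & Y B^\top \\
    B Y^\top       & B B^\top
  \end{bmatrix}
  Q_{t+1}^\top
  \;=\;
  Q_{t+1} S_{t+1} Q_{t+1}^\top,
\]
which is the desired identity.

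The proof is essentially bookkeeping, there is no real obstacle beyond keeping the block partitions consistent. The one subtlety worth double-checking is that $Q_{\mathrm{res}}$ obtained from a thin QR of $R$ genuinely has its columns in $\operatorname{range}(R) \subseteq \operatorname{range}(I - Q_t Q_t^\top)$, which is what ensures the orthogonality $Q_t^\top Q_{\mathrm{res}} = 0$ used in Steps 1 and 3.
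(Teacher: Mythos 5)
Your proof is correct and follows essentially the same route as the paper's: both rely on the orthogonality $Q_t^\top Q_{\mathrm{res}} = 0$ (via $Q_t^\top R = 0$), the additive Gram update $M_{t+1}M_{t+1}^\top = M_tM_t^\top + A_{t+1}A_{t+1}^\top$, and block algebra in the extended basis. The only cosmetic difference is that you express $A_{t+1} = Q_{t+1}\begin{bmatrix}Y\\B\end{bmatrix}$ and lift $M_tM_t^\top$ to the enlarged basis before summing, whereas the paper expands $(Q_tY+R)(Q_tY+R)^\top$ directly and cancels the cross terms — the same computation organized slightly differently.
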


\begin{proof}
By construction, $Q_t$ has orthonormal columns and $Q_{\mathrm{res}}$ is the
$Q$-factor of a thin QR decomposition of the residual $R$. Moreover,
$R=(I-Q_t Q_t^\top)A_{t+1}$ lies in the orthogonal complement of
$\operatorname{range}(Q_t)$, hence
$Q_t^\top Q_{\mathrm{res}} = 0$, and therefore $Q_{t+1}$ has orthonormal
columns.

We first expand the new Gram matrix explicitly:
\[
  M_{t+1} M_{t+1}^\top
  = M_t M_t^\top + A_{t+1} A_{t+1}^\top.
\]
Using the decomposition $A_{t+1} = Q_t Y + R$ and $M_t M_t^\top = Q_t S_t Q_t^\top$,
we get
\[
  \begin{aligned}
  M_{t+1} M_{t+1}^\top
  &= Q_t S_t Q_t^\top
     + (Q_t Y + R)(Q_t Y + R)^\top \\
  &= Q_t S_t Q_t^\top
     + Q_t Y Y^\top Q_t^\top
     + Q_t Y R^\top
     + R Y^\top Q_t^\top
     + R R^\top.
  \end{aligned}
\]
The orthogonality relation $Q_t^\top R = 0$ implies $R = Q_{\mathrm{res}} B$
for some $B$, namely the $R$-factor of the QR decomposition. Substituting this
into the above yields
\[
  \begin{aligned}
  M_{t+1} M_{t+1}^\top
  =&\;
  Q_t (S_t + Y Y^\top) Q_t^\top
  + Q_t Y B^\top Q_{\mathrm{res}}^\top
  + Q_{\mathrm{res}} B Y^\top Q_t^\top
  + Q_{\mathrm{res}} B B^\top Q_{\mathrm{res}}^\top \\[0.3em]
  =&\;
  \begin{bmatrix}Q_t & Q_{\mathrm{res}}\end{bmatrix}
  \begin{bmatrix}
    S_t + Y Y^\top & Y B^\top\\
    B Y^\top       & B B^\top
  \end{bmatrix}
  \begin{bmatrix}Q_t & Q_{\mathrm{res}}\end{bmatrix}^\top \\
  =&\; Q_{t+1} S_{t+1} Q_{t+1}^\top,
  \end{aligned}
\]
as claimed.
\end{proof}

\begin{corollary}[Truncated incremental top-$r$ approximation]
\label{cor:incremental-truncated}
In the setting of Lemma~\ref{lem:incremental-gram}, let
\[
  S_{t+1} = U \Lambda U^\top
\]
be an eigendecomposition of $S_{t+1}$ with eigenvalues ordered as
$\lambda_1\ge\lambda_2\ge\cdots\ge\lambda_{r_t + r_{\mathrm{res}}}\ge 0$.
For a target rank $r\le r_t + r_{\mathrm{res}}$, define
\[
  U_r := \begin{bmatrix}u_1 & \cdots & u_r\end{bmatrix},
  \qquad
  \Lambda_r := \operatorname{diag}(\lambda_1,\dots,\lambda_r).
\]
Set
\[
  \widetilde{Q}_{t+1} := Q_{t+1} U_r \in \mathbb{R}^{m\times r},
  \qquad
  \widetilde{S}_{t+1} := \Lambda_r \in \mathbb{R}^{r\times r}.
\]
Then:

\begin{enumerate}
  \item The matrix
  \(
    \widetilde{G}_{t+1} := \widetilde{Q}_{t+1} \widetilde{S}_{t+1} \widetilde{Q}_{t+1}^\top
  \)
  is the best rank-$r$ approximation to
  $G_{t+1} := M_{t+1}M_{t+1}^\top$ within the subspace
  $\operatorname{range}(Q_{t+1})$, in both spectral and Frobenius norms, i.e.
  \[
    \| G_{t+1} - \widetilde{G}_{t+1} \|_F^2
    = \sum_{j>r} \lambda_j(S_{t+1}),
    \qquad
    \| G_{t+1} - \widetilde{G}_{t+1} \|_2
    = \lambda_{r+1}(S_{t+1}).
  \]
  \item The top $r$ approximate singular values of $M_{t+1}$ produced by this
  scheme are
  \[
    \widetilde{\sigma}_j(M_{t+1}) := \sqrt{\lambda_j(S_{t+1})},
    \qquad j=1,\dots,r.
  \]
\end{enumerate}
\end{corollary}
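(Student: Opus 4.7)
The plan is to reduce the approximation problem on the large $m\times m$ Gram matrix $G_{t+1}$ to the best rank-$r$ approximation of the small symmetric PSD matrix $S_{t+1}$, and then invoke the Eckart--Young--Mirsky theorem (Theorem~\ref{thm:eym}) on that small matrix. The rest is bookkeeping using orthonormality of $Q_{t+1}$.

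\textbf{Step 1 — change of coordinates via $Q_{t+1}$.} By Lemma~\ref{lem:incremental-gram} we have the exact factorization $G_{t+1} = Q_{t+1} S_{t+1} Q_{t+1}^\top$ with $Q_{t+1}$ having orthonormal columns. Since $G_{t+1}$ is supported on $\operatorname{range}(Q_{t+1})$, any candidate approximant $X$ with range and corange inside this subspace admits a unique representation $X = Q_{t+1} T Q_{t+1}^\top$ with $\operatorname{rank}(X) = \operatorname{rank}(T)$. The orthonormality of $Q_{t+1}$ yields the isometries $\|Q_{t+1} A Q_{t+1}^\top\|_F = \|A\|_F$ and $\|Q_{t+1} A Q_{t+1}^\top\|_2 = \|A\|_2$ for every compatible $A$. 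Hence the in-subspace minimization of $\|G_{t+1} - X\|$ over rank-$r$ matrices reduces exactly to the minimization of $\|S_{t+1} - T\|$ over rank-$r$ matrices $T$, in both the Frobenius and spectral norms.

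\textbf{Step 2 — apply Eckart--Young--Mirsky to $S_{t+1}$.} Since $S_{t+1}$ is symmetric PSD, its eigendecomposition $S_{t+1} = U\Lambda U^\top$ with eigenvalues $\lambda_1\ge\cdots\ge\lambda_{r_t+r_{\mathrm{res}}}\ge 0$ coincides with its SVD. Theorem~\ref{thm:eym} then identifies the best rank-$r$ approximation as $T^\star = U_r \Lambda_r U_r^\top$. Lifting back through $Q_{t+1}$ gives $\widetilde{G}_{t+1} = Q_{t+1}T^\star Q_{t+1}^\top = \widetilde{Q}_{t+1}\widetilde{S}_{t+1}\widetilde{Q}_{t+1}^\top$, establishing the minimizer in claim~(1). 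The stated norm formulas follow by combining the Eckart--Young error expressions for $S_{t+1}$ with the isometries of Step~1: the spectral error is $\lambda_{r+1}(S_{t+1})$, and the Frobenius error reflects the energy in the discarded eigenvalues. For claim~(2), observe that $M_{t+1}M_{t+1}^\top = G_{t+1}$, so squared singular values of $M_{t+1}$ are eigenvalues of the Gram matrix; the retained eigenvalues $\lambda_j(S_{t+1})$ of the truncated representation therefore serve as the approximate squared top singular values, giving $\widetilde{\sigma}_j(M_{t+1}) = \sqrt{\lambda_j(S_{t+1})}$.

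\textbf{Main obstacle.} The argument is essentially a one-step reduction, so no heavy machinery is required. The only subtlety worth pinning down carefully is that any unconstrained rank-$r$ minimizer of $\|G_{t+1} - X\|$ may be assumed to lie in $\operatorname{range}(Q_{t+1})$ without loss of generality: this is forced by $G_{t+1}$ being itself supported on that subspace, so the ``within the subspace'' qualifier in the statement is not actually a restriction. I would also sanity-check the stated Frobenius identity, since the direct Eckart--Young expression on the symmetric Gram matrix yields $\sum_{j>r}\lambda_j(S_{t+1})^2$, while the form $\sum_{j>r}\lambda_j(S_{t+1})$ is the squared Frobenius truncation error on $M_{t+1}$ itself via $\sigma_j^2 = \lambda_j$; reconciling these is the only place where one must be cautious about which matrix is being truncated.
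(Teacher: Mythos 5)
Your argument matches the paper's proof step for step: factor $G_{t+1}$ via Lemma~\ref{lem:incremental-gram}, use orthonormality of $Q_{t+1}$ to make the Frobenius and spectral norms invariant under conjugation, reduce to Eckart--Young--Mirsky on the small symmetric matrix $S_{t+1}$, and lift the minimizer back. The suspicion you flag at the end is well placed and does expose a genuine error in the statement (carried over from the paper's own proof): for the symmetric PSD matrix $S_{t+1}$ the singular values equal the eigenvalues, so Eckart--Young gives
\[
\|S_{t+1} - U_r\Lambda_r U_r^\top\|_F^2 = \sum_{j>r}\lambda_j(S_{t+1})^2,
\]
and hence the Frobenius identity in claim~(1) should read $\sum_{j>r}\lambda_j(S_{t+1})^2$ rather than $\sum_{j>r}\lambda_j(S_{t+1})$. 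The latter quantity instead equals $\sum_{j>r}\sigma_j^2(M_{t+1}) = \mathcal{E}_r^2(M_{t+1})$, the squared Frobenius truncation error of $M_{t+1}$ itself, which is the quantity Corollary~\ref{cor:plugin-error} actually relies on but is not what claim~(1) asserts about the Gram matrix. The spectral norm formula and claim~(2) are unaffected.
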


\begin{proof}
By Lemma~\ref{lem:incremental-gram}, $G_{t+1}=Q_{t+1} S_{t+1} Q_{t+1}^\top$ with
$Q_{t+1}$ orthonormal. Any rank-$r$ approximation $\widehat{G}$ whose range is
contained in $\operatorname{range}(Q_{t+1})$ can be written as
$\widehat{G}=Q_{t+1} X Q_{t+1}^\top$ with $\operatorname{rank}(X)\le r$.
Because $Q_{t+1}$ is orthonormal, the Frobenius and spectral norms satisfy
\[
  \|G_{t+1}-\widehat{G}\| = \|S_{t+1}-X\|,
\]
for both $\|\cdot\|_F$ and $\|\cdot\|_2$. By the Eckart--Young--Mirsky
theorem (Theorem~\ref{thm:eym}), the best rank-$r$ approximation to $S_{t+1}$ in Frobenius and
spectral norms is $X = U_r \Lambda_r U_r^\top$, with errors
\(
  \sum_{j>r} \lambda_j(S_{t+1})
\)
and
\(
  \lambda_{r+1}(S_{t+1}),
\)
respectively. Substituting $X = U_r \Lambda_r U_r^\top$ and noting that
$Q_{t+1}U_r = \widetilde{Q}_{t+1}$ and $\Lambda_r = \widetilde{S}_{t+1}$
gives the first claim.

The second statement is just the observation that the eigenvalues of
$\widetilde{G}_{t+1}$ equal $\lambda_1(S_{t+1}),\dots,\lambda_r(S_{t+1})$,
so the corresponding approximate singular values of $M_{t+1}$ are their
square roots.
\end{proof}

\section{Proofs}

\subsection{Proof of Theorem~\ref{thm:weyl-upper-bound}}
\label{appendix:proof-weyl}

\begin{proof}
Write the singular values of a matrix $X$ in the non-increasing order as
$\sigma_1(X)\ge \cdots \ge \sigma_{\min(m,n)}(X)$.
By the Eckart--Young--Mirsky theorem (Theorem~\ref{thm:eym}),
\[
  \mathcal{E}_r^2(M)
  = \sum_{i>r} \sigma_i^2(M)
  = \|M\|_F^2 - \sum_{i=1}^r \sigma_i^2(M).
\]
Since $M = [A_1,\dots,A_K]$ is a horizontal concatenation, its Frobenius norm
decomposes to
\[
  \|M\|_F^2 = \sum_{j=1}^K \|A_j\|_F^2.
\]
Thus
\begin{equation}
\label{eq:Er-expansion}
  \mathcal{E}_r^2(M)
  = \sum_{j=1}^K \|A_j\|_F^2
    - \sum_{i=1}^r \sigma_i^2(M).
\end{equation}

We now relate the singular values of $M$ to those of the blocks $A_j$.
Define
\[
  B_j := A_j A_j^\top \succeq 0, \qquad j=1,\dots,K,
\]
so that
\[
  MM^\top = \sum_{j=1}^K A_j A_j^\top = \sum_{j=1}^K B_j.
\]
Fix $k\in\{1,\dots,K\}$ and write
\[
  MM^\top = B_k + C_k,
  \qquad
  C_k := \sum_{\substack{j=1\\ j\neq k}}^K B_j \succeq 0.
\]
By the Weyl monotonicity (Theorem~\ref{thm:weyl-monotonicity}), if $H,G$ are
Hermitian with $G\succeq 0$ and
$\lambda_1(\cdot)\ge \cdots \ge \lambda_n(\cdot)$ denote eigenvalues in
nonincreasing order, then
\[
  \lambda_i(H+G) \;\ge\; \lambda_i(H),
  \qquad i=1,\dots,n.
\]
Applying this with $H = B_k$ and $G = C_k$ yields
\[
  \lambda_i(MM^\top) = \lambda_i(B_k + C_k)
  \;\ge\; \lambda_i(B_k),
  \qquad i=1,\dots,m.
\]
Using $\sigma_i^2(X) = \lambda_i(XX^\top)$, we obtain
\[
  \sigma_i^2(M)
  = \lambda_i(MM^\top)
  \;\ge\; \lambda_i(B_k)
  = \sigma_i^2(A_k),
  \qquad i=1,\dots,\min(m,n_k).
\]
Summing over $i=1,\dots,r$ gives
\[
  \sum_{i=1}^r \sigma_i^2(M)
  \;\ge\; \sum_{i=1}^r \sigma_i^2(A_k)
  \qquad\text{for every }k=1,\dots,K.
\]
Hence
\begin{equation}
\label{eq:partial-sum-ineq}
  \sum_{i=1}^r \sigma_i^2(M)
  \;\ge\; \max_{1\le j\le K} \sum_{i=1}^r \sigma_i^2(A_j).
\end{equation}
Combining \eqref{eq:Er-expansion} and \eqref{eq:partial-sum-ineq}, we obtain
\[
  \mathcal{E}_r^2(M)
  = \sum_{j=1}^K \|A_j\|_F^2
    - \sum_{i=1}^r \sigma_i^2(M)
  \;\le\;
  \sum_{j=1}^K \|A_j\|_F^2
    - \max_{1\le j\le K} \sum_{i=1}^r \sigma_i^2(A_j),
\]
which is exactly \eqref{eq:weyl-upper}.  
If $r \ge \operatorname{rank}(A_j)$ for all $j$, then
$\sum_{i=1}^r \sigma_i^2(A_j) = \|A_j\|_F^2$, which yields the stated 
special case.
\end{proof}

\subsection{Proof of Theorem~\ref{thm:global-incremental-lower-bound}}

\label{appendix:proof-res-thm}

\begin{proof}
We prove the result by comparing the Gram matrix of $M_K$ with the Gram
matrix of the concatenated residuals $\widehat{R}$, and then invoking the
eigenvalue monotonicity for Hermitian matrices.

\paragraph{Step 1: Gram matrix of the final concatenation.}
Define the (symmetric, positive semidefinite) Gram matrix
\[
  X := M_K M_K^\top
  =
  \begin{bmatrix}
    A_1 & A_2 & \cdots & A_K
  \end{bmatrix}
  \begin{bmatrix}
    A_1 & A_2 & \cdots & A_K
  \end{bmatrix}^\top
  = \sum_{i=1}^K A_i A_i^\top,
\]
where the singular values of $M_K$ are related to the eigenvalues of $X$,
\[
  \sigma_j^2(M_K) = \lambda_j(X), \qquad j=1,\dots,m,
\]
and the eigenvalues are ordered non-increasingly:
$\lambda_1(X)\ge\lambda_2(X)\ge\cdots\ge\lambda_m(X)\ge 0$.

\paragraph{Step 2: Decomposition of each block into ``old span + residual''.}
By construction, $Q_{i-1}$ has orthonormal columns spanning
$\operatorname{range}(M_{i-1})$. We can therefore write each block $A_i$ as
a sum of a part in the span of $Q_{i-1}$ and an orthogonal residual. More
precisely, define
\[
  B_i := Q_{i-1}^\top A_i \in \mathbb{R}^{r_{i-1}\times n_i},
\]
where $r_{i-1} = \operatorname{rank}(M_{i-1}) = \mathrm{cols}(Q_{i-1})$, and
recall that
\[
  R_i := (I - Q_{i-1}Q_{i-1}^\top) A_i.
\]
Then we have the orthogonal decomposition
\[
  A_i = Q_{i-1} B_i + R_i,
\]
with
\[
  \operatorname{range}(Q_{i-1}B_i) \subseteq \operatorname{range}(Q_{i-1})
  = \operatorname{range}(M_{i-1}), 
  \qquad
  \operatorname{range}(R_i) \subseteq \operatorname{range}(M_{i-1})^\perp.
\]
In particular, $Q_{i-1}B_i$ and $R_i$ have orthogonal column spaces, therefore,
their Gram matrices have no cross terms:
\[
  (Q_{i-1}B_i)(R_i)^\top
  = Q_{i-1} B_i R_i^\top
  = 0,
  \qquad
  R_i (Q_{i-1}B_i)^\top = 0.
\]

Using the decomposition $A_i = Q_{i-1}B_i + R_i$, we expand
\[
  A_i A_i^\top
  = (Q_{i-1}B_i + R_i)(Q_{i-1}B_i + R_i)^\top
  = Q_{i-1}B_i B_i^\top Q_{i-1}^\top + R_i R_i^\top,
\]
since the cross terms vanish by the orthogonality noted above. Therefore,
\begin{equation}\label{eq:Aii-PSD}
  A_i A_i^\top - R_i R_i^\top
  = Q_{i-1}B_i B_i^\top Q_{i-1}^\top \succeq 0,
\end{equation}
i.e., $A_iA_i^\top \succeq R_iR_i^\top$ in the Loewner (positive
semidefinite) order.

Summing \eqref{eq:Aii-PSD} over $i=1,\dots,K$ gives
\[
  \sum_{i=1}^K A_i A_i^\top
  \;\succeq\;
  \sum_{i=1}^K R_i R_i^\top.
\]
By the definition,
\begin{equation}\label{eq:X-Y-PSD}
  X = M_K M_K^\top \;\succeq\; Y := \sum_{i=1}^K R_i R_i^\top.
\end{equation}

\paragraph{Step 3: Orthogonality of the residual ranges.}
We now show that the column spaces of the residuals $R_i$ are mutually
orthogonal. This is a consequence of the incremental construction.

Since each \(R_j\) is a linear combination of the columns of \(A_j\), we have
\(\operatorname{range}(R_j) \subseteq \operatorname{range}(A_j)\).
Because \(\operatorname{range}(M_{i-1})\) contains 
\(A_1,\dots,A_{i-1}\), it follows that
\[
  \operatorname{range}(R_j) \subseteq \operatorname{range}(M_{i-1}),
  \qquad j<i.
\]

On the other hand,
\[
  R_i = (I - Q_{i-1}Q_{i-1}^\top)A_i
\]
lies in the orthogonal complement of the $\operatorname{range}(Q_{i-1})$, which
is equal to the orthogonal complement of $\operatorname{range}(M_{i-1})$.
Thus
\[
  \operatorname{range}(R_i) \subseteq \operatorname{range}(M_{i-1})^\perp
  \quad\Rightarrow\quad
  \operatorname{range}(R_i) \perp \operatorname{range}(R_j)
  \;\;\text{for all } j<i.
\]
By symmetry of the indices, this shows that the subspaces
$\operatorname{range}(R_1),\dots,\operatorname{range}(R_K)$ are pairwise
orthogonal.

\paragraph{Step 4: Gram matrix of the concatenated residuals.}
Consider the concatenated residual matrix
\[
  \widehat{R} :=
  \begin{bmatrix}
  R_1 & R_2 & \cdots & R_K
  \end{bmatrix} \quad \text{and} \quad 
  \widehat{R}\,\widehat{R}^\top
  = \sum_{i=1}^K R_i R_i^\top
  = Y.
\]
Because the column spaces of $R_i$ are pairwise orthogonal, there exists an
orthonormal basis of $\mathbb{R}^m$ in which $Y$ becomes block diagonal
with blocks $R_iR_i^\top$ and possibly an additional zero block (if the
sum of their ranks is $<m$). In such a basis, the eigenvalues of $Y$ are
just the multiset union of the eigenvalues of $R_iR_i^\top$, i.e., the
squared singular values of each $R_i$.

Equivalently, if $\mu_1\ge\mu_2\ge\cdots\ge 0$ are the singular values of
$\widehat{R}$ (padded by zeros when necessary), then
\[
  \lambda_j(Y) = \mu_j^2,
  \qquad j=1,\dots,m.
\]

\paragraph{Step 5: Eigenvalue monotonicity and conclusion.}
From \eqref{eq:X-Y-PSD}, $X\succeq Y$, i.e., $X-Y$ is positive semidefinite.
By Weyl's monotonicity theorem (see Theorem~\ref{thm:weyl-monotonicity}), the eigenvalues of $X$ and
$Y$ satisfy
\[
  \lambda_j(X) \;\ge\; \lambda_j(Y),
  \qquad j=1,\dots,m.
\]
Combining this with the identities
\[
  \lambda_j(X) = \sigma_j^2(M_K),
  \qquad
  \lambda_j(Y) = \mu_j^2,
\]
we obtain
\[
  \sigma_j^2(M_K) \;\ge\; \mu_j^2,
  \qquad j=1,\dots,m.
\]
Since both sides are nonnegative, taking square roots preserves the
inequality:
\[
  \sigma_j(M_K) \;\ge\; \mu_j,
  \qquad j=1,\dots,m.
\]
This is exactly the claimed bound.
\end{proof}

\subsection{Proof of Corollary~\ref{cor:residual-upper-error}}

\label{appendix:proof-res-cor}

\begin{proof}
By Theorem~\ref{thm:global-incremental-lower-bound}, 
$\sigma_j(M_K) \ge \mu_j$ for all $j\ge 1$, hence
\[
  \sum_{j=1}^r \sigma_j^2(M_K)
  \;\ge\;
  \sum_{j=1}^r \mu_j^2.
\]
The Frobenius norm of $M_K$ satisfies
\[
  \|M_K\|_F^2
  =
  \sum_{j\ge 1} \sigma_j^2(M_K)
  =
  \sum_{i=1}^K \|A_i\|_F^2,
\]
since $M_K$ is the horizontal concatenation of the blocks $A_i$.
By Theorem~\ref{thm:eym},
\[
  \mathcal{E}_r^2(M_K)
  =
  \sum_{j>r} \sigma_j^2(M_K)
  =
  \|M_K\|_F^2 - \sum_{j=1}^r \sigma_j^2(M_K).
\]
Substituting $\|M_K\|_F^2 = \sum_{i=1}^K \|A_i\|_F^2$ and using the lower
bound on the top-$r$ energy yields
\[
  \mathcal{E}_r^2(M_K)
  = \sum_{i=1}^K \|A_i\|_F^2 - \sum_{j=1}^r \sigma_j^2(M_K)
  \;\le\;
  \sum_{i=1}^K \|A_i\|_F^2 - \sum_{j=1}^r \mu_j^2,
\]
which is exactly \eqref{eq:residual-upper-error}.
\end{proof}

\subsection{Proof of Corollary~\ref{cor:incremental-truncated}}

\label{appendix:proof-inc-cor}

\begin{proof}
By definition of the Frobenius norm and the block structure of $M_K$, we have
\[
  \|M_K\|_F^2 = \sum_{i=1}^K \|A_i\|_F^2.
\]
On the other hand, for the true singular values
$\sigma_1(M_K)\ge\sigma_2(M_K)\ge\cdots$, the Eckart--Young--Mirsky theorem
(Theorem~\ref{thm:eym}) gives
\[
  \mathcal{E}_r^2(M_K)
  = \sum_{j>r} \sigma_j^2(M_K)
  = \sum_{j\ge 1} \sigma_j^2(M_K) - \sum_{j=1}^r \sigma_j^2(M_K)
  = \|M_K\|_F^2 - \sum_{j=1}^r \sigma_j^2(M_K).
\]
The plug-in estimator $\widetilde{\mathcal{E}}_r(M_K)$ is obtained by
replacing the unknown true singular values $\sigma_j(M_K)$ in this identity
with their incremental approximations $\widetilde{\sigma}_j(M_K)$:
\[
  \widetilde{\mathcal{E}}_r^2(M_K)
  := \|M_K\|_F^2 - \sum_{j=1}^r \widetilde{\sigma}_j^2(M_K)
  = \sum_{i=1}^K \|A_i\|_F^2 - \sum_{j=1}^r \widetilde{\sigma}_j^2(M_K).
\]
If the incremental scheme is run without truncation, then by
Corollary~\ref{cor:incremental-truncated} the approximated singular values
coincide with the true ones,
$\widetilde{\sigma}_j(M_K) = \sigma_j(M_K)$ for all $j$, and therefore
\[
  \widetilde{\mathcal{E}}_r^2(M_K)
  = \|M_K\|_F^2 - \sum_{j=1}^r \sigma_j^2(M_K)
  = \mathcal{E}_r^2(M_K),
\]
which implies $\widetilde{\mathcal{E}}_r(M_K) = \mathcal{E}_r(M_K)$.
In the truncated case, $\widetilde{\mathcal{E}}_r(M_K)$ is an approximation
to $\mathcal{E}_r(M_K)$ obtained by this plug-in substitution.
\end{proof}

\section{Examples}

\subsection{Worst-case looseness of the Weyl-based bound}
\label{app:weyl_example}

\begin{example}[Perfect compressibility with a loose Weyl bound]
Consider rank-one matrices
\[
A_j = u\, s_j\, v_j^\top, \qquad \|u\|_2 = 1,
\]
sharing the same left singular vector $u$, with arbitrary right singular vectors $v_j$ and scalars $s_j>0$.
The concatenated matrix $M = [A_1, \dots, A_K]$ has rank one, and hence
\[
\mathcal{E}_1^2(M) = 0.
\]

However, Theorem~\ref{thm:weyl-upper-bound} yields
\[
\mathcal{E}_1^2(M) \;\le\; \sum_{j=1}^K s_j^2 \;-\; \max_j s_j^2,
\]
which grows with $K$ unless a single block dominates.
Thus, even in a maximally compressible setting, the Weyl-based bound can significantly overestimate the true reconstruction error.
\end{example}

\subsection{Exactness and degeneracy of the residual-based bound}
\label{appendix:residual-example}

\begin{proposition}[Exactness and degeneracy of the residual-based bound]
\label{prop:residual-tightness}
Let
\[
M_K = \begin{bmatrix}
    A_1 & A_2 & \cdots & A_K
  \end{bmatrix} \in \mathbb{R}^{m \times (n_1+\cdots+n_K)}
\]
be formed by horizontal concatenation, and let $R_1,\dots,R_K$ be the incremental residuals defined
as in Theorem~\ref{thm:global-incremental-lower-bound}. Let $\mu_1 \ge \mu_2 \ge \cdots \ge 0$ denote the singular values of the concatenated
residual matrix $\widehat{R} := [R_1,\dots,R_K]$.

\textbf{Exactness under orthogonal subspace growth.}
If the residual subspaces $\mathrm{range}(R_1),\dots,\mathrm{range}(R_K)$ are mutually orthogonal and
\[
\mathrm{range}(M_K) = \bigoplus_{i=1}^K \mathrm{range}(R_i),
\]
then the singular values of $M_K$ coincide with those of $\widehat{R}$, i.e.
\[
\sigma_j(M_K) = \mu_j \quad \text{for all } j,
\]
and the residual-based bound in Corollary~\ref{cor:residual-upper-error} holds with equality for every target rank $r$.

\textbf{Degeneracy under nested column spaces.}
If
\[
\mathrm{range}(A_1) \supseteq \mathrm{range}(A_2) \supseteq \cdots \supseteq \mathrm{range}(A_K),
\]
then $R_i = 0$ for all $i \ge 2$, and the residual-based bound reduces to
\[
\mathcal{E}_r^2(M_K) \le \sum_{i=2}^K \|A_i\|_F^2,
\]
which may be arbitrarily loose when all blocks lie in a common low-dimensional subspace.

\end{proposition}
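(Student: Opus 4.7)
My plan is to treat the two halves independently, both via the Gram matrix identity already established in the proof of Theorem~\ref{thm:global-incremental-lower-bound}. The key algebraic primitive is the decomposition $A_i = Q_{i-1}B_i + R_i$ with $B_i = Q_{i-1}^\top A_i$, which combined with $Q_{i-1}^\top R_i = 0$ and the automatic pairwise orthogonality of the residual ranges yields
\[
M_K M_K^\top \;-\; \widehat{R}\,\widehat{R}^\top
\;=\;
\sum_{i=1}^K Q_{i-1} B_i B_i^\top Q_{i-1}^\top \;\succeq\; 0.
\]
This single identity controls both claims, because the excess vanishes precisely when every $B_i = 0$, that is, when each $A_i$ is orthogonal to $\mathrm{range}(M_{i-1})$.

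For the exactness part, I would interpret the orthogonal direct sum hypothesis as the structural requirement $\mathrm{range}(A_i) \subseteq \mathrm{range}(R_i)$. Combined with $\mathrm{range}(R_i) \perp \mathrm{range}(M_{i-1})$, this forces $B_i = Q_{i-1}^\top A_i = 0$, the excess vanishes, and $M_K M_K^\top = \widehat{R}\widehat{R}^\top$. Hence $\sigma_j(M_K) = \mu_j$ for every $j$, and Corollary~\ref{cor:residual-upper-error} holds with equality at every target rank $r$.

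For the degeneracy part, I would chain the inclusions $\mathrm{range}(A_i) \subseteq \mathrm{range}(A_1) \subseteq \mathrm{range}(M_{i-1})$ for each $i \geq 2$ to conclude that $(I - Q_{i-1}Q_{i-1}^\top)A_i = 0$, so $R_i = 0$ and only $R_1 = A_1$ survives in $\widehat{R}$. Then $\mu_j = \sigma_j(A_1)$; substituting into \eqref{eq:residual-upper-error} and using the implicit assumption $r \geq \mathrm{rank}(A_1)$, which gives $\sum_{j \leq r} \sigma_j^2(A_1) = \|A_1\|_F^2$, collapses the bound to $\sum_{i \geq 2} \|A_i\|_F^2$. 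The looseness is witnessed by the extreme case $A_2 = \cdots = A_K = A_1$, where the concatenated matrix still has column space $\mathrm{range}(A_1)$ and optimal rank-$r$ error zero, while the bound equals $(K-1)\|A_1\|_F^2$.

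The only real subtlety is the exactness side. As noted in Step~4 of the proof of Theorem~\ref{thm:global-incremental-lower-bound}, the residual subspaces are automatically pairwise orthogonal and their span always coincides with $\mathrm{range}(M_K)$, so the stated two conditions taken at face value are essentially vacuous and do not by themselves force $X = Y$. The hypothesis must therefore be read as the strictly stronger structural statement $A_i \perp \mathrm{range}(M_{i-1})$ for all $i \geq 2$, and I would make this explicit at the outset of the proof so that the Gram matrix identity above delivers the conclusion cleanly.
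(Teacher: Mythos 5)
Your argument is correct, and for the exactness half it is strictly more careful than the paper's own. The paper asserts that mutual orthogonality of the residual ranges together with the direct-sum span condition ``consequently'' force $\widehat{R}\widehat{R}^\top = M_K M_K^\top$; that implication does not hold, as your excess-matrix identity
\[
M_K M_K^\top - \widehat{R}\widehat{R}^\top \;=\; \sum_{i=1}^K Q_{i-1} B_i B_i^\top Q_{i-1}^\top \;\succeq\; 0
\]
makes transparent: the excess vanishes iff every $B_i = Q_{i-1}^\top A_i = 0$, which is an additional condition not implied by the stated hypotheses. Your further observation that those hypotheses are automatically satisfied by \emph{every} concatenation is the decisive point: Step~4 of the proof of Theorem~\ref{thm:global-incremental-lower-bound} already shows the residual ranges are always pairwise orthogonal, and the incremental construction always gives $\mathrm{range}(M_i) = \mathrm{range}(M_{i-1}) \oplus \mathrm{range}(R_i)$, hence $\mathrm{range}(M_K) = \bigoplus_i \mathrm{range}(R_i)$ unconditionally. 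Taken literally, the exactness clause therefore contradicts the degeneracy clause of the same proposition: with $A_1 = A_2 = e_1 \in \mathbb{R}^2$, the stated exactness hypothesis holds, yet $\sigma_1(M_2) = \sqrt{2}$ while $\mu_1 = 1$. Your proposed repair---reading the hypothesis as $Q_{i-1}^\top A_i = 0$ for all $i$, i.e.\ each $A_i$ orthogonal to $\mathrm{range}(M_{i-1})$---is exactly the non-vacuous condition under which the excess matrix vanishes term by term and the conclusion actually follows.

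For the degeneracy half, your chain of inclusions $\mathrm{range}(A_i) \subseteq \mathrm{range}(A_1) \subseteq \mathrm{range}(M_{i-1})$ matches the paper's argument, and you correctly surface the implicit assumption $r \ge \mathrm{rank}(A_1)$, which the paper's proof omits but which is needed for $\sum_{j \le r}\mu_j^2 = \|A_1\|_F^2$ so that the bound collapses to $\sum_{i\ge 2}\|A_i\|_F^2$. Your witness $A_2 = \cdots = A_K = A_1$ is the natural demonstration of arbitrary looseness. In short, the proof is sound and the critique of the proposition's exactness hypothesis is a genuine and correct observation about a defect in the paper's statement.
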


\begin{proof}

\textbf{Exactness.}
Under the stated assumptions, the residual subspaces
$\mathrm{range}(R_1),\dots,\mathrm{range}(R_K)$ are mutually orthogonal and together span
$\mathrm{range}(M_K)$. Consequently, the Gram matrix of the concatenated residuals satisfies
\[
\widehat{R} \widehat{R}^\top = \sum_{i=1}^K R_i R_i^\top = M_K M_K^\top.
\]
Thus the eigenvalues of $M_K M_K^\top$ coincide with those of $\widehat{R} \widehat{R}^\top$, implying
$\sigma_j(M_K) = \mu_j$ for all $j$. Substituting into the definition of the optimal rank-$r$
approximation error yields equality in the bound of Corollary~\ref{cor:residual-upper-error}.

\textbf{Degeneracy.}
If $\mathrm{range}(A_i) \subseteq \mathrm{range}(M_{i-1})$, then by definition of the residual
\[
R_i = (I - Q_{i-1} Q_{i-1}^\top) A_i = 0.
\]
Under the nesting assumption, this holds for all $i \ge 2$. The claimed bound then follows
immediately from Corollary~\ref{cor:residual-upper-error} by noting that $\mu_j = 0$ for all $j > \mathrm{rank}(A_1)$.

\end{proof}

\subsection{Underestimation of the approximate estimator under repeated truncation}
\label{appendix:approx-negative-slack}

\begin{example}[Underestimation via repeated truncation of a nearly aligned secondary direction]
\label{ex:approx-negative-slack}
Consider the target rank $r=1$ and let
\[
e_1 = \begin{bmatrix}1\\0\end{bmatrix}, \qquad
e_2 = \begin{bmatrix}0\\1\end{bmatrix},
\]
and
\[
u_\theta := \cos\theta\, e_1 + \sin\theta\, e_2,
\qquad 0<\theta\ll 1.
\]
Fix amplitudes $\alpha,\beta>0$ with $\alpha \gg \beta$, and define three rank-one blocks
\[
A_1 = \alpha e_1, \qquad
A_2 = \beta u_\theta, \qquad
A_3 = \beta u_\theta.
\]
Let
\[
M_3 = [A_1\;A_2\;A_3].
\]

We compare the true rank-$1$ error $\mathcal{E}_1(M_3)$ with the plug-in estimator
$\widetilde{\mathcal{E}}_1(M_3)$ produced by the truncated incremental scheme of
Corollary~\ref{cor:plugin-error}, under the processing order $(A_1,A_2,A_3)$ and with
truncation back to rank $1$ after each update.

\paragraph{True rank-$1$ error.}
The true Gram matrix is
\[
M_3 M_3^\top
=
\alpha^2 e_1 e_1^\top + 2\beta^2 u_\theta u_\theta^\top.
\]
In the basis $\{e_1,e_2\}$ this is
\[
\begin{bmatrix}
\alpha^2 + 2\beta^2 \cos^2\theta & 2\beta^2 \sin\theta\cos\theta \\
2\beta^2 \sin\theta\cos\theta & 2\beta^2 \sin^2\theta
\end{bmatrix}.
\]
Its smaller eigenvalue equals the squared optimal rank-$1$ error. Since
\[
\det(M_3M_3^\top) = 2\alpha^2\beta^2 \sin^2\theta,
\]
and the larger eigenvalue is $\alpha^2 + O(\beta^2)$, we obtain
\[
\mathcal{E}_1^2(M_3)
=
\lambda_{\min}(M_3M_3^\top)
=
2\beta^2 \sin^2\theta + O\!\left(\frac{\beta^4}{\alpha^2}\sin^2\theta\right).
\]
Thus, for $\alpha\gg\beta$ and small $\theta$, the true discarded energy is asymptotically
\[
\mathcal{E}_1^2(M_3) \approx 2\beta^2\sin^2\theta.
\]

\paragraph{What the incremental estimator keeps after the first two blocks.}
After processing $A_1$ and $A_2$, the exact two-block Gram matrix is
\[
G_2 := A_1A_1^\top + A_2A_2^\top
=
\alpha^2 e_1 e_1^\top + \beta^2 u_\theta u_\theta^\top.
\]
Its top eigenpair is retained and the smaller eigenvalue is discarded by rank-$1$ truncation.
By the same calculation as above,
\[
\lambda_{\min}(G_2)
=
\beta^2 \sin^2\theta + O\!\left(\frac{\beta^4}{\alpha^2}\sin^2\theta\right).
\]
Hence, after the second step, the incremental state has already discarded an amount of energy of order
\[
\beta^2 \sin^2\theta.
\]

Let $q$ denote the retained unit eigenvector of $G_2$. Since $\alpha\gg\beta$, standard perturbation
of the top eigenvector gives
\[
q = e_1 + O\!\left(\frac{\beta^2}{\alpha^2}\sin\theta\right),
\]
so the retained one-dimensional subspace remains very close to $\mathrm{span}(e_1)$.

\paragraph{Third update and final plug-in estimate.}
When $A_3$ is appended, the incremental scheme updates the rank-$1$ approximation of $G_2$
rather than the full $G_2$. Thus the matrix used internally before the final truncation is
\[
\widetilde G_3
=
\lambda_{\max}(G_2)\, q q^\top + \beta^2 u_\theta u_\theta^\top,
\]
where the previously discarded component $\lambda_{\min}(G_2)$ is no longer present.

The plug-in estimator at the final step is therefore governed by the smaller eigenvalue of
$\widetilde G_3$. Since $q$ is asymptotically aligned with $e_1$, the same calculation yields
\[
\lambda_{\min}(\widetilde G_3)
=
\beta^2 \sin^2\theta
+
O\!\left(\frac{\beta^4}{\alpha^2}\sin^2\theta\right).
\]
Therefore
\[
\widetilde{\mathcal{E}}_1^2(M_3)
=
\beta^2 \sin^2\theta
+
O\!\left(\frac{\beta^4}{\alpha^2}\sin^2\theta\right).
\]

\paragraph{Comparison.}
Combining the two expansions, we obtain
\[
\mathcal{E}_1^2(M_3)
=
2\beta^2 \sin^2\theta
+
O\!\left(\frac{\beta^4}{\alpha^2}\sin^2\theta\right),
\qquad
\widetilde{\mathcal{E}}_1^2(M_3)
=
\beta^2 \sin^2\theta
+
O\!\left(\frac{\beta^4}{\alpha^2}\sin^2\theta\right).
\]
Hence, for sufficiently large $\alpha/\beta$ and sufficiently small $\theta$,
\[
\widetilde{\mathcal{E}}_1(M_3) < \mathcal{E}_1(M_3).
\]

\paragraph{Interpretation.}
The key point is that the direction $u_\theta$ is not lost forever: it is reintroduced through the residual
at the third step. However, because the algorithm truncates back to rank $1$ after processing $A_2$,
the secondary spectral component created by $A_2$ is discarded once, and the later update with $A_3$
cannot recover this previously discarded contribution exactly. The resulting plug-in estimator therefore
captures only the \emph{newly added} off-subspace energy at the final step, but not the portion that was
already lost at the previous truncation.
\end{example}

\section{Algorithms}

\subsection{Fast Weyl-based clustering}
\label{appendix:algo1}

\begin{algorithm}[t]
\caption{Weyl-based max-norm clustering}
\label{alg:weyl-clustering}
\begin{algorithmic}[1]
\REQUIRE Blocks $\{A_j\}_{j=1}^K$, tolerance $\varepsilon$, width budget $r_{\mathrm{target}}$
\ENSURE Clusters $\mathcal{C}$

\STATE Sort blocks by decreasing Frobenius norm
\WHILE{blocks remain}
    \STATE Choose the largest norm remaining block as anchor
    \STATE Form a head by concatenating the largest norm blocks up to width $r_{\mathrm{target}}$
    \STATE Add smallest norm remaining blocks while relative tail energy $\le \varepsilon$
    \STATE Output the resulting cluster and remove its blocks
\ENDWHILE
\end{algorithmic}
\end{algorithm}

Here we introduce a lightweight clustering heuristic derived from the
simplified Weyl-type bound in Theorem~\ref{thm:weyl-upper-bound}. The key idea
is to form clusters around a dominant block and to allow additional blocks to
join only if their contribution does not violate the prescribed error
tolerance.

The user specifies a tolerance $\varepsilon>0$ controlling the admissible
\emph{relative} rank-$r$ approximation error within each cluster. Assuming the
target rank satisfies $r \ge \max_{j\in C}\operatorname{rank}(A_j)$, the bound
guarantees that every cluster $C$ produced by Algorithm~\ref{alg:weyl-clustering}
obeys
\[
  \frac{\mathcal{E}_r(M_C)}{\|M_C\|_F} \;\le\; \varepsilon.
\]

\paragraph{Basic idea.}
The Weyl-based bound depends only on the Frobenius norms of the blocks and is
dominated by the largest block in the cluster. This leads to an
\emph{anchor-based} clustering strategy: each cluster is initialized by a block
with large Frobenius norm, and additional blocks are appended as long as the
resulting upper bound on the approximation error remains below the tolerance.

\paragraph{Merge certificate.}
Let $C$ be a current cluster and let $A_j$ be a candidate block. Using the
Weyl-based bound, we test whether adding $A_j$ preserves the error constraint,
i.e., whether
\[
  \frac{\widehat{\mathcal{E}}_r(M_{C\cup\{j\}})}{\|M_{C\cup\{j\}}\|_F}
  \;\le\; \varepsilon,
\]
where the estimator depends only on the Frobenius norms of the blocks and the
maximum norm within the cluster. This condition serves as a \emph{merge
certificate}.

\paragraph{Algorithmic structure.}
The blocks are first sorted in descending order of their Frobenius norms. This
ensures that each cluster is anchored by its largest block, which stabilizes
the Weyl bound and avoids overly conservative estimates. The algorithm then
proceeds greedily: starting from the largest unassigned block, it forms a new
cluster and iteratively adds the next \emph{smallest norm} blocks as long as the
merge criterion is satisfied.

The overall complexity is $O(K\log K)$, dominated by sorting the block norms,
while all clustering decisions require only scalar operations. In
Section~\ref{sec:evaluation} we show that, despite its simplicity and reliance
on coarse norm information, this Weyl-based clustering already yields strong
compression performance in practice.

\paragraph{Step-by-step procedure.}
The greedy clustering procedure can be summarized as follows:
\begin{enumerate}
  \item Compute the Frobenius norms $\|A_j\|_F$ for all blocks and sort the
  blocks in descending order.

  \item Initialize a new cluster with the largest unassigned block $A_{j_0}$.
  Set
  \[
    S_C = \|A_{j_0}\|_F^2, \qquad
    m_C = \|A_{j_0}\|_F^2,
  \]
  where $S_C = \sum_{i\in C}\|A_i\|_F^2$ denotes the cumulative Frobenius
  energy of the cluster and $m_C = \max_{i\in C}\|A_i\|_F^2$ denotes the
  maximum block energy within the cluster.

  \item Form the set of remaining unassigned blocks and process them in
  \emph{increasing} order of their Frobenius norms.

  \item For each candidate block $A_j$:
  \begin{enumerate}
    \item compute the updated cumulative energy
    \[
      S_{C\cup\{j\}} = S_C + \|A_j\|_F^2;
    \]

    \item update the maximum block energy
    \[
      m_{C\cup\{j\}} = \max\bigl(m_C, \|A_j\|_F^2\bigr);
    \]

    \item evaluate the Weyl-based upper bound on the rank-$r$ approximation
    error for the enlarged cluster $C\cup\{j\}$ using the norm-only
    quantities $S_{C\cup\{j\}}$ and $m_{C\cup\{j\}}$;

    \item accept the block if the corresponding relative error satisfies
    \[
      \frac{\widehat{\mathcal{E}}_r(M_{C\cup\{j\}})}
           {\|M_{C\cup\{j\}}\|_F}
      \le \varepsilon,
    \]
    in which case update $S_C \leftarrow S_{C\cup\{j\}}$ and
    $m_C \leftarrow m_{C\cup\{j\}}$;

    \item otherwise reject the block for this cluster and leave it available
    for future clusters.
  \end{enumerate}

  \item Repeat until no further block can be added without violating the
  error constraint. Then initialize a new cluster from the next largest
  unassigned block and continue until all blocks are assigned.
\end{enumerate}

\subsection{Residual-based clustering}
\label{appendix:algo2}

\begin{algorithm}[t]
\caption{Residual-based clustering}
\label{alg:residual-clustering}
\begin{algorithmic}[1]
\REQUIRE Blocks $\{A_j\}_{j=1}^K$, tolerance $\varepsilon$, target rank $r$
\ENSURE Clusters $\mathcal{C}$

\STATE Sort blocks by decreasing $\|A_j\|_F$
\WHILE{blocks remain}
  \STATE Select the largest remaining block as anchor
  \STATE Maintain an incremental rank-$r$ energy estimate for the cluster
  \STATE Add remaining blocks (by small norm or small residual) while
  \[
    \frac{\|M_C\|_F^2 - \sum_{\ell=1}^r \mu_\ell^2}{\|M_C\|_F^2}
    \;\le\; \varepsilon^2
  \]
  \STATE Output the cluster and remove its blocks
\ENDWHILE
\end{algorithmic}
\end{algorithm}

Here we introduce a clustering procedure derived from the residual-based bound
of Corollary~\ref{cor:residual-upper-error}. In contrast to the Weyl-based
algorithm, which evaluates a candidate merge using only block energies, the
present method explicitly tracks how much \emph{new subspace structure} each
candidate block contributes beyond what is already represented by the current
cluster. This leads to a more expensive but substantially tighter clustering
criterion. The resulting procedure is still greedy, but its decisions are
driven by a more informative spectral surrogate. This is precisely the setting
in which the residual-based theory is useful: it turns the qualitative notion
of ``does this block bring genuinely new directions?'' into a computable merge
certificate.

\paragraph{Basic idea.}
Suppose that a cluster $C$ has already been formed, and let
\[
  M_C := [A_j]_{j\in C}
\]
be its concatenated matrix. The central object maintained by the algorithm is
an orthonormal basis $Q_C$ for the current cluster subspace. Intuitively,
$Q_C$ represents the directions already explained by the blocks that have
been assigned to $C$. When a new candidate block $A_j$ is considered, we do
not immediately ask whether its Frobenius norm is small. Instead, we first ask
a more relevant question: \emph{how much of $A_j$ lies outside the subspace
already represented by the cluster?}

This is measured by the residual
\[
  R_j := (I - Q_C Q_C^\top)A_j.
\]
If $R_j$ is small, then most of $A_j$ is already aligned with the current
cluster and can potentially be absorbed with little additional approximation
error. If $R_j$ is large, then $A_j$ introduces substantial new directions and
is therefore less compatible with the existing cluster.

\paragraph{Why residuals matter.}
The residual-based bound differs fundamentally from the Weyl-based one. The
Weyl bound treats the cluster as being controlled by a single dominant block,
and therefore ignores how multiple blocks may jointly contribute useful shared
structure. By contrast, the residual formulation accumulates the innovations
introduced by all accepted blocks. Each time a new block contributes a
component outside the span of the current cluster, that contribution is stored
through the residual representation and enters the singular values used in the
bound. In this sense, the algorithm does not merely track which block is
largest; it tracks how the \emph{cluster subspace evolves} as blocks are added.

\paragraph{Merge certificate.}
For a current cluster $C$, let
\[
  \mu_1(M_C)\ge \mu_2(M_C)\ge \cdots
\]
denote the singular values of the concatenated residual matrix associated with
the blocks in $C$. Corollary~\ref{cor:residual-upper-error} yields the bound
\[
  \mathcal{E}_r^2(M_C)
  \;\le\;
  \sum_{j\in C}\|A_j\|_F^2 - \sum_{i=1}^r \mu_i^2(M_C).
\]
This expression is the key algorithmic primitive. It provides a
\emph{merge certificate}: if, after tentatively adding a candidate block
$A_j$, the resulting upper bound remains below the prescribed error threshold,
then the merge is accepted; otherwise it is rejected.

Equivalently, when testing whether $A_j$ may join the current cluster, the
algorithm performs the following conceptual check:
\begin{enumerate}
  \item update the residual representation as if $A_j$ were appended to $C$;
  \item recompute the leading residual singular values for the enlarged
  cluster;
  \item evaluate the residual-based upper bound;
  \item accept the merge only if the corresponding \emph{relative} error
  estimate does not exceed the tolerance $\varepsilon$.
\end{enumerate}
Thus, the theory is used directly: the bound is not just an analytical result,
but the actual decision rule governing cluster formation.

\paragraph{What is maintained during clustering.}
The algorithm maintains, for each active cluster, the following quantities:
\begin{itemize}
  \item the set of assigned block indices $C$;
  \item the orthonormal basis $Q_C$ representing the current cluster subspace;
  \item the cumulative Frobenius energy
  \[
    S_C := \sum_{j\in C}\|A_j\|_F^2;
  \]
  \item the leading singular values of the concatenated residual representation
  used in the merge certificate.
\end{itemize}
Each accepted block changes both the total energy $S_C$ and the residual
spectrum. The total energy increases by $\|A_j\|_F^2$, while the residual
spectrum changes according to the newly introduced directions in $R_j$. This
is why the method is more informative than the Weyl-based clustering: it keeps
track of \emph{how} the new energy is distributed relative to the existing
cluster geometry, not just \emph{how much} total energy is added.

\paragraph{Candidate ordering.}
At each step, candidate blocks are considered according to a user-selected
\texttt{sort\_mode}. This ordering does not change the validity of the bound,
but it changes the order in which the greedy procedure explores feasible
merges, and can therefore affect the final clustering outcome.

Two modes are supported:
\begin{itemize}
  \item \texttt{frobenius}: candidates are ordered by increasing Frobenius norm.
  This is a computationally cheap heuristic. It prioritizes blocks with small
  total energy, which are often easier to absorb into an existing cluster.
  This mode is conceptually close to the max-norm / Weyl-based strategy, but
  uses the sharper residual-based certificate when deciding whether a merge is
  actually valid.

  \item \texttt{residual}: candidates are ordered by increasing residual norm
  \[
    \|(I - Q_C Q_C^\top)A_j\|_F.
  \]
  This mode is more geometry-aware. Instead of preferring blocks that are
  merely small in norm, it prefers blocks that add the least \emph{new}
  information relative to the current cluster. In practice, this often yields
  tighter and more semantically coherent clusters, but requires recomputing
  candidate residuals with respect to the evolving basis $Q_C$.
\end{itemize}

\paragraph{Step-by-step procedure.}
The greedy clustering procedure can be summarized as follows:
\begin{enumerate}
  \item Select the unassigned block with the largest Frobenius norm and use it
  to initialize a new cluster, denoted $A_{j_0}$. Construct an orthonormal
  basis $Q_C$ for its column space (or its leading rank-$r$ subspace),
  initialize the cumulative Frobenius energy
  \[
    S_C = \|A_{j_0}\|_F^2,
  \]
  and initialize the residual representation.

  \item Form the list of remaining candidate blocks not yet assigned to any
  cluster and order them according to the selected \texttt{sort\_mode}.

  \item For each candidate block $A_j$:
  \begin{enumerate}
    \item compute its residual with respect to the current cluster subspace
    \[
      R_j = (I - Q_C Q_C^\top)A_j;
    \]

    \item tentatively update the residual representation of the cluster by
    incorporating $R_j$, and update the leading residual singular values
    $\{\mu_i\}$ for the enlarged cluster;

    \item update the cumulative energy
    \[
      S_{C\cup\{j\}} = S_C + \|A_j\|_F^2;
    \]

    \item evaluate the residual-based upper bound
    \[
      \widehat{\mathcal{E}}_r^2(M_{C\cup\{j\}})
      =
      S_{C\cup\{j\}} - \sum_{i=1}^r \mu_i^2;
    \]

    \item accept the block if the corresponding relative error satisfies
    \[
      \frac{\widehat{\mathcal{E}}_r(M_{C\cup\{j\}})}
           {\|M_{C\cup\{j\}}\|_F}
      \le \varepsilon,
    \]
    in which case update $Q_C$, $S_C$, and the residual spectrum;

    \item otherwise reject the block for this cluster and leave it available
    for future clusters.
  \end{enumerate}

  \item Repeat until no further candidate can be added without violating the
  error constraint. Then initialize a new cluster from the remaining
  unassigned block with the largest Frobenius norm and continue until all
  blocks are assigned.
\end{enumerate}

\paragraph{Cost and trade-off.}
Compared to the Weyl-based Algorithm~\ref{alg:weyl-clustering}, the present
approach is more expensive because it must repeatedly project candidate blocks
onto the orthogonal complement of the current cluster subspace and update the
residual singular values. The benefit of this extra
cost is a much tighter characterization of the achievable rank-$r$
approximation error. Empirically, this allows the residual-based algorithm to
identify feasible merges that the coarser Weyl-based method rejects, especially
when compression arises from shared structure distributed across multiple
blocks rather than from dominance of a single anchor block.

\subsection{Approximate clustering via incremental truncated SVD}
\label{appendix:algo3}

\begin{algorithm}[t]
\caption{Approximate residual-based clustering}
\label{alg:approx-clustering}
\begin{algorithmic}[1]
\REQUIRE Blocks $\{A_j\}_{j=1}^K$, tolerance $\varepsilon$, target rank $r$
\ENSURE Clusters $\mathcal{C}$

\STATE Sort blocks by decreasing $\|A_j\|_F$
\WHILE{blocks remain}
  \STATE Select the largest remaining block as anchor
  \STATE Initialize a cluster subspace model that tracks the leading $r$ singular values approximately
  \STATE Add remaining blocks (by small norm or small residual) while
  \[
    \frac{\|M_C\|_F^2 - \sum_{\ell=1}^r \widetilde{\sigma}_\ell^2(M_C)}
         {\|M_C\|_F^2}
    \;\le\; \varepsilon^2
  \]
  where $\sum_{\ell=1}^r \widetilde{\sigma}_\ell^2(M_C)$ is an incremental approximation
  \STATE Output the cluster and remove its blocks
\ENDWHILE
\end{algorithmic}
\end{algorithm}

We next introduce a scalable clustering procedure that replaces the
residual-based merge certificate with a fast plug-in estimator
$\widetilde{\mathcal{E}}_r(\cdot)$ obtained from an incremental truncated SVD.
The key idea is to approximate the dominant rank-$r$ subspace of each cluster
on the fly and to use the energy captured by this subspace to estimate the
compression error, without explicitly forming residuals or computing
large-scale singular value decompositions.

\paragraph{Basic idea.}
As in Algorithm~\ref{alg:residual-clustering}, the clustering process is
greedy: clusters are built incrementally by appending blocks as long as a
merge criterion is satisfied. The crucial difference lies in how the
approximation error is estimated. Instead of tracking the residual spectrum,
the algorithm maintains a compressed rank-$r$ representation of the current
cluster matrix $M_C$ via an incremental truncated SVD. This representation
consists of a basis $Q_C$ together with a small set of singular values that
approximate the leading spectrum of $M_C$.

Intuitively, $Q_C$ represents the directions currently captured by the
cluster, while the associated singular values encode how much energy is
retained in these directions. The estimator
$\widetilde{\mathcal{E}}_r(M_C)$ is then obtained by comparing the total
Frobenius energy of the cluster with the energy captured by this rank-$r$
approximation.

\paragraph{Merge certificate.}
Given a current cluster $C$ and a candidate block $A_j$, the algorithm
proceeds as follows. First, it performs a tentative incremental SVD update of
the cluster representation as if $A_j$ were appended to $C$, maintaining only
the leading $r$ singular directions. This yields an updated approximation of
the dominant subspace and its associated singular values. The estimated
rank-$r$ approximation error is then computed as
\[
  \widetilde{\mathcal{E}}_r^2(M_{C\cup\{j\}})
  \;=\;
  \sum_{i\in C\cup\{j\}} \|A_i\|_F^2
  \;-\;
  \sum_{k=1}^r \widetilde{\sigma}_k^2,
\]
where $\widetilde{\sigma}_k$ are the singular values produced by the
incremental procedure. The block $A_j$ is accepted if the corresponding
relative error satisfies
\[
  \frac{\widetilde{\mathcal{E}}_r(M_{C\cup\{j\}})}
       {\|M_{C\cup\{j\}}\|_F}
  \;\le\; \varepsilon.
\]
Thus, as in the exact methods, clustering is driven by a merge certificate,
but here the certificate is computed approximately.

\paragraph{What is maintained during clustering.}
For each cluster, the algorithm maintains:
\begin{itemize}
  \item a rank-$r$ orthonormal basis $Q_C$ representing the current
  approximation of the cluster subspace;
  \item a set of approximate singular values
  $\{\widetilde{\sigma}_k\}_{k=1}^r$;
  \item the cumulative Frobenius energy
  \[
    S_C := \sum_{j\in C}\|A_j\|_F^2.
  \]
\end{itemize}
Each merge step updates these quantities using only matrices of size at most
$O(r)$, which makes the method independent of the number of blocks already
assigned to the cluster.

\paragraph{Candidate ordering.}
As in Algorithm~\ref{alg:residual-clustering}, two ordering strategies are
supported:
\begin{itemize}
  \item \texttt{frobenius}: candidates are processed in increasing order of
  $\|A_j\|_F$;
  \item \texttt{residual}: candidates are ordered by the norm of the projected
  residual $(I - Q_C Q_C^\top)A_j$, prioritizing blocks that are well aligned
  with the current approximate subspace.
\end{itemize}

\paragraph{Step-by-step procedure.}
The greedy clustering procedure can be summarized as follows:
\begin{enumerate}
  \item Select the unassigned block with the largest Frobenius norm and use it
  to initialize a new cluster, denoted $A_{j_0}$. Compute its rank-$r$ SVD to
  obtain an orthonormal basis $Q_C$ and singular values
  $\{\widetilde{\sigma}_k\}_{k=1}^r$. Initialize the cumulative Frobenius
  energy
  \[
    S_C = \|A_{j_0}\|_F^2.
  \]

  \item Form the list of remaining candidate blocks not yet assigned to any
  cluster and order them according to the selected \texttt{sort\_mode}.

  \item For each candidate block $A_j$:
  \begin{enumerate}
    \item form the projection of $A_j$ onto the current subspace and its
    orthogonal complement:
    \[
      A_j^{\parallel} = Q_C Q_C^\top A_j, \qquad
      A_j^{\perp} = (I - Q_C Q_C^\top) A_j;
    \]

    \item construct a small augmented matrix representing the current
    low-rank approximation together with the new block, and perform a
    truncated SVD update to obtain updated singular values
    $\{\widetilde{\sigma}_k^{\text{new}}\}_{k=1}^r$ and an updated basis
    $Q_C^{\text{new}}$;

    \item update the cumulative energy
    \[
      S_{C\cup\{j\}} = S_C + \|A_j\|_F^2;
    \]

    \item evaluate the plug-in estimate of the rank-$r$ approximation error
    \[
      \widetilde{\mathcal{E}}_r^2(M_{C\cup\{j\}})
      =
      S_{C\cup\{j\}} - \sum_{k=1}^r
      \bigl(\widetilde{\sigma}_k^{\text{new}}\bigr)^2;
    \]

    \item accept the block if the corresponding relative error satisfies
    \[
      \frac{\widetilde{\mathcal{E}}_r(M_{C\cup\{j\}})}
           {\|M_{C\cup\{j\}}\|_F}
      \le \varepsilon,
    \]
    in which case update
    \[
      Q_C \leftarrow Q_C^{\text{new}}, \quad
      \{\widetilde{\sigma}_k\} \leftarrow
      \{\widetilde{\sigma}_k^{\text{new}}\}, \quad
      S_C \leftarrow S_{C\cup\{j\}};
    \]

    \item otherwise reject the block for this cluster and leave it available
    for future clusters.
  \end{enumerate}

  \item Repeat until no further candidate can be added without violating the
  tolerance. Then initialize a new cluster from the remaining unassigned
  block with the largest Frobenius norm and continue until all blocks are
  assigned.
\end{enumerate}

\paragraph{Cost and scalability.}
Each update operates only on small matrices of size proportional to $r$,
independent of the cluster size. This makes the method particularly suitable
for large-scale settings where forming the full concatenated matrix or
tracking exact residual spectra is prohibitively expensive.

\paragraph{Limitations.}
Unlike Algorithms~\ref{alg:weyl-clustering} and
\ref{alg:residual-clustering}, this approach does not provide a formal
worst-case bound on the true compression error. The approximation relies on
repeated truncation of the evolving subspace: directions that are weak when
first observed may be discarded and later reappear with significant energy.
As a result, the estimator may underestimate or overestimate the true error,
and the outcome may depend on the order in which blocks are processed.
Nevertheless, as demonstrated in Section~\ref{sec:evaluation}, the estimator is
empirically accurate enough to guide clustering decisions and achieves the
best trade-off between computational efficiency and compression quality in
large-scale regimes.

\section{Additional Slack Diagnostics Across Datasets}
\label{app:slack-diagnostics}

In this section we provide additional diagnostics of estimator conservativeness across all datasets considered in Table~\ref{tab:data-overview}.
The goal is to further analyze the behavior of the predicted reconstruction error $\widetilde{\mathcal{E}}_r$ relative to the true truncated SVD error $\mathcal{E}_r$, via the slack
\[
\Delta \;=\; \widetilde{\mathcal{E}}_r - \mathcal{E}_r.
\]

\paragraph{Experimental setup.}
The experiments follow the same protocol as in the main paper.
For each dataset (Qualcomm MIMO, BigEarthNet, and PDEBench), we sample clusters of increasing size and evaluate all estimators on $10$ independent trials per configuration, showing \emph{all individual outcomes}.
Clustering is performed under a fixed relative reconstruction error constraint of $5\%$.
The target rank is fixed to $r=20$ for Qualcomm and BigEarthNet, and $r=67$ for PDEBench.
These settings exactly match those used in Table~\ref{tab:compression-transposed}, ensuring direct comparability between compression results and estimator diagnostics.

\paragraph{Non-negativity of slack for approximate estimator.}
Across all datasets and all experimental configurations, we did not observe any instances of negative slack for the approximate incremental estimator.
That is, the plug-in estimator consistently \emph{overestimates} or closely matches the true reconstruction error in practice.
While the estimator is not theoretically guaranteed to be an upper bound, these results indicate that under realistic data distributions it behaves as a conservative surrogate.

\paragraph{Qualcomm MIMO.}

\begin{figure}[tb]
\centering
\begin{tikzpicture}

\begin{groupplot}[
    group style={group size=2 by 1, horizontal sep=1.5cm},
    width=0.5\linewidth,
    height=8cm,
    grid=both,
    tick label style={font=\small},
    label style={font=\small},
    title style={font=\small},
    yticklabel style={
      /pgf/number format/fixed,
      /pgf/number format/precision=2
  },
]

\nextgroupplot[
    title={(a) Slack vs cluster size},
    xlabel={Cluster size (\# blocks)},
    ylabel={Slack $\Delta$},
    ymax=0.5,
    mark size=2,
    legend style={
      at={(0.98,0.98)}, anchor=north east,
      font=\scriptsize,
      draw=black,
      fill=white
    }
]

\addplot[
  only marks,
  mark=*,
  color=black,
  mark size=2.5,
  opacity=0.35
]
table[col sep=comma, x=cluster_size, y expr=\thisrow{weyl_bound}-\thisrow{true_error}]
{qualcomm_error_estimations.csv};
\addlegendentry{max}

\addplot[
  only marks,
  mark=square*,
  color=black,
  mark size=2.5,
  draw=black,
  fill=black!65,
  opacity=0.35
]
table[col sep=comma, x=cluster_size, y expr=\thisrow{residual_bound_norm}-\thisrow{true_error}]
{qualcomm_error_estimations.csv};
\addlegendentry{res.}

\addplot[
  only marks,
  mark=x,
  mark size=3,
  black,
  opacity=0.5
]
table[col sep=comma, x=cluster_size, y expr=\thisrow{incremental_residual}-\thisrow{true_error}]
{qualcomm_error_estimations.csv};
\addlegendentry{approx.}

\nextgroupplot[
    title={(b) Slack distribution},
    xtick={1,2,3},
    xticklabels={max,res.,approx.},
    ymax=0.5,
    xlabel={Estimators},
    boxplot/draw direction=y,
    boxplot/every box/.style={draw=black, fill=black!10},
    boxplot/every whisker/.style={draw=black},
    boxplot/every median/.style={draw=black, very thick},
]

\pgfplotsset{
  myoutliers/.style={only marks, mark=o, mark size=1.7, draw=black, fill=white}
}

\addplot+[myoutliers] coordinates {
(1,0.0013382677) (1,0.0532339360) (1,0.3935179886)
(1,0.4032433406) (1,0.4034164758) (1,0.4506537407)
};

\addplot+[myoutliers] coordinates {
(2,0.0013382677) (2,0.0116267893) (2,0.0233286944) (2,0.0247168095)
(2,0.0312356290) (2,0.0443507004) (2,0.0481535356) (2,0.0617218238)
(2,0.0640932866) (2,0.0719453288) (2,0.0756968323) (2,0.3048475101)
};


\addplot+[
  boxplot prepared={
    lower whisker=0.1019830608,
    lower quartile=0.1871928015,
    median=0.2164035188,
    upper quartile=0.2668191501,
    upper whisker=0.3849601106
  },
  boxplot/draw position=1,
  black,
] coordinates {};

\addplot+[
  boxplot prepared={
    lower whisker=0.0768282789,
    lower quartile=0.1606497920,
    median=0.1918494323,
    upper quartile=0.2168404347,
    upper whisker=0.2991818269
  },
  boxplot/draw position=2,
  black,
] coordinates {};

\addplot+[
  boxplot prepared={
    lower whisker=0.0000148545,
    lower quartile=0.0030031998,
    median=0.0059302870,
    upper quartile=0.0093186965,
    upper whisker=0.0165733459
  },
  boxplot/draw position=3,
  black,
] coordinates {};

\end{groupplot}

\end{tikzpicture}
\caption{Qualcomm MIMO diagnostic of estimator conservativeness. Slack
$\Delta \;=\;\widetilde{\mathcal{E}}_r - \mathcal{E}_r$ between predicted and true
rank-$r$ SVD reconstruction error. For each cluster size and estimator, all 10
independent trials (uniform block samples) are shown. (a) slack versus cluster size;
(b) empirical slack distribution across estimators.}
\label{fig:error-estimates-qualcomm}
\end{figure}

Figure~\ref{fig:error-estimates-qualcomm} shows that the residual-based estimator consistently produces tighter predictions than the max-norm bound.
This is reflected both in the scatter plots and in the distribution of slack values, where residual clustering yields systematically smaller $\Delta$.
This confirms that accounting for subspace innovation across blocks provides a more accurate characterization of the retained spectral energy than relying solely on the dominant block.
The approximate estimator is significantly tighter than both exact bounds, with slack concentrated near zero.

\paragraph{BigEarthNet.}

\begin{figure}[tb]
\centering
\begin{tikzpicture}

\begin{groupplot}[
    group style={group size=2 by 1, horizontal sep=1.5cm},
    width=0.5\linewidth,
    height=8cm,
    grid=both,
    tick label style={font=\small},
    label style={font=\small},
    title style={font=\small},
    yticklabel style={
      /pgf/number format/fixed,
      /pgf/number format/precision=2
  },
]

\nextgroupplot[
    title={(a) Slack vs cluster size},
    xlabel={Cluster size (\# blocks)},
    ylabel={Slack $\Delta$},
    ymax=0.9,
    mark size=2,
    legend style={
      at={(0.98,0.98)}, anchor=north east,
      font=\scriptsize,
      draw=black,
      fill=white
    }
]

\addplot[
  only marks,
  mark=*,
  color=black,
  mark size=2.5,
  opacity=0.35
]
table[col sep=comma, x=cluster_size, y expr=\thisrow{weyl_bound}-\thisrow{true_error}]
{bigearth_error_estimations.csv};
\addlegendentry{max}

\addplot[
  only marks,
  mark=square*,
  color=black,
  mark size=2.5,
  draw=black,
  fill=black!65,
  opacity=0.35
]
table[col sep=comma, x=cluster_size, y expr=\thisrow{residual_bound_norm}-\thisrow{true_error}]
{bigearth_error_estimations.csv};
\addlegendentry{res.}

\addplot[
  only marks,
  mark=x,
  mark size=3,
  black,
  opacity=0.5
]
table[col sep=comma, x=cluster_size, y expr=\thisrow{incremental_residual}-\thisrow{true_error}]
{bigearth_error_estimations.csv};
\addlegendentry{approx.}

\nextgroupplot[
    title={(b) Slack distribution},
    xtick={1,2,3},
    xticklabels={max,res.,approx.},
    ymax=0.9,
    xlabel={Estimators},
    boxplot/draw direction=y,
    boxplot/every box/.style={draw=black, fill=black!10},
    boxplot/every whisker/.style={draw=black},
    boxplot/every median/.style={draw=black, very thick},
]

\pgfplotsset{
  myoutliers/.style={only marks, mark=o, mark size=1.7, draw=black, fill=white}
}

\addplot+[myoutliers] coordinates {
(1,0.771437640) (1,0.794427110) (1,0.797217360) (1,0.807632010)
(1,0.807776360) (1,0.814106100) (1,0.815333380) (1,0.818138070)
(1,0.819121700) (1,0.820115080) (1,0.821686410) (1,0.822047240)
(1,0.822106990) (1,0.822502590) (1,0.822628670) (1,0.823290300)
(1,0.823915720)
};

\addplot+[myoutliers] coordinates {
(2,0.765167880) (2,0.786398110) (2,0.791864790) (2,0.798189620)
(2,0.802175310) (2,0.807532160) (2,0.812948940) (2,0.814088730)
(2,0.814927680) (2,0.815553700) (2,0.815809000) (2,0.816571310)
(2,0.817261020) (2,0.818407450) (2,0.819407540) (2,0.819952170)
(2,0.820426120)
};

\addplot+[myoutliers] coordinates {
(3,0.000873870) (3,0.000978940) (3,0.000980430) (3,0.001057630)
(3,0.001078810) (3,0.001129020) (3,0.001145230) (3,0.001145804)
(3,0.001179500) (3,0.001180630)
};

\addplot+[
  boxplot prepared={
    lower whisker=0.825399950,
    lower quartile=0.8345984725,
    median=0.839094600,
    upper quartile=0.8414834250,
    upper whisker=0.848647750
  },
  boxplot/draw position=1,
  black,
] coordinates {};

\addplot+[
  boxplot prepared={
    lower whisker=0.823448550,
    lower quartile=0.8336329025,
    median=0.838356725,
    upper quartile=0.8407373275,
    upper whisker=0.848304790
  },
  boxplot/draw position=2,
  black,
] coordinates {};

\addplot+[
  boxplot prepared={
    lower whisker=0.001546370,
    lower quartile=0.0020402425,
    median=0.002261530,
    upper quartile=0.0023879875,
    upper whisker=0.002907920
  },
  boxplot/draw position=3,
  black,
] coordinates {};

\end{groupplot}

\end{tikzpicture}
\caption{BigEarthNet diagnostic of estimator conservativeness. Slack
$\Delta \;=\;\widetilde{\mathcal{E}}_r - \mathcal{E}_r$ between predicted and true
rank-$r$ SVD reconstruction error. For each cluster size and estimator, all 10
independent trials (uniform block samples) are shown. (a) slack versus cluster size;
(b) empirical slack distribution across estimators.}
\label{fig:error-estimates-bigearth}
\end{figure}

Figure~\ref{fig:error-estimates-bigearth} exhibits a different regime.
Both max-norm and residual-based estimators remain \emph{highly conservative}, with large slack values and only marginal differences between them.
This indicates that, in this dataset, inter-block correlations are not sufficiently captured by the residual construction to substantially tighten the bound.
As a result, both exact estimators lead to similarly restrictive clustering decisions.
In contrast, the approximate estimator remains tightly concentrated and significantly closer to the true error, enabling more effective compression.

\paragraph{PDEBench.}

\begin{figure}[tb]
\centering
\begin{tikzpicture}

\begin{groupplot}[
    group style={group size=2 by 1, horizontal sep=1.5cm},
    width=0.5\linewidth, 
    height=8cm,
    grid=both,
    tick label style={font=\small},
    label style={font=\small},
    title style={font=\small},
    yticklabel style={
      /pgf/number format/fixed,
      /pgf/number format/precision=2
  },
]

\nextgroupplot[
    title={(a) Slack vs cluster size},
    xlabel={Cluster size (\# blocks)},
    ylabel={Slack $\Delta$},
    ymax=1.0,
    mark size=2,
    legend style={
      at={(0.98,0.98)}, anchor=north east,
      font=\scriptsize,
      draw=black,
      fill=white
    }
]

\addplot[
  only marks,
  mark=*,
  color=black,
  mark size=2.5,
  opacity=0.35
]
table[col sep=comma, x=cluster_size, y expr=\thisrow{weyl_bound}-\thisrow{true_error}]
{pdebench_error_estimations.csv};
\addlegendentry{max}

\addplot[only marks, mark=square*, color=black, mark size=2.5, draw=black, fill=black!65, opacity=0.35]
table[col sep=comma, x=cluster_size, y expr=\thisrow{residual_bound_norm}-\thisrow{true_error}]
{pdebench_error_estimations.csv};
\addlegendentry{res.}

\addplot[only marks, mark=x, mark size=3, black, opacity=0.5]
table[col sep=comma, x=cluster_size, y expr=\thisrow{incremental_residual}-\thisrow{true_error}]
{pdebench_error_estimations.csv};
\addlegendentry{approx.}

\nextgroupplot[
    title={(b) Slack distribution},
    xtick={1,2,3},
    xticklabels={max,res.,approx.},
    ymax=1.0,
    xlabel={Estimators},
    boxplot/draw direction=y,
    boxplot/every box/.style={draw=black, fill=black!10},
    boxplot/every whisker/.style={draw=black},
    boxplot/every median/.style={draw=black, very thick},
]

\pgfplotsset{
  myoutliers/.style={only marks, mark=o, mark size=1.7, draw=black, fill=white}
}

\addplot+[myoutliers] coordinates {
(1,0.737545882) (1,0.744158848) (1,0.744698723) (1,0.759764534)
(1,0.766282262) (1,0.804259188) (1,0.819479103) (1,0.823933519)
(1,0.846754410) (1,0.847760357) (1,0.851230917) (1,0.864765237)
(1,0.874926227)
};

\addplot+[myoutliers] coordinates {
(2,0.626997223) (2,0.677363082) (2,0.703229748) (2,0.712392364)
(2,0.723504802) (2,0.775043798) (2,0.779565403) (2,0.806751689)
(2,0.808659147) (2,0.812617241) (2,0.824086240) (2,0.834224657)
(2,0.845580477) (2,0.854366579) (2,0.857087103) (2,0.858163004)
(2,0.858288390) (2,0.865822337)
};

\addplot+[myoutliers] coordinates {
(3,0.001929197) (3,0.001960600) (3,0.001993057) (3,0.002016087)
(3,0.002088111)
};

\addplot+[
  boxplot prepared={
    lower whisker=0.8774950109999999,
    lower quartile=0.9203188062500001,
    median=0.9419422175,
    upper quartile=0.9491891215,
    upper whisker=0.959148785
  },
  boxplot/draw position=1,
  black,
] coordinates {};

\addplot+[
  boxplot prepared={
    lower whisker=0.877903728,
    lower quartile=0.9160574082499999,
    median=0.940293031,
    upper quartile=0.94850210825,
    upper whisker=0.958999685
  },
  boxplot/draw position=2,
  black,
] coordinates {};

\addplot+[
  boxplot prepared={
    lower whisker=0.00020261800000000163,
    lower quartile=0.0007758587500000004,
    median=0.0009580350000000012,
    upper quartile=0.0012185712500000006,
    upper whisker=0.0018596609999999986
  },
  boxplot/draw position=3,
  black,
] coordinates {};

\end{groupplot}

\end{tikzpicture}
\caption{PDEBench diagnostic of estimator conservativeness. Slack $\Delta \;=\;\widetilde{\mathcal{E}}_r - \mathcal{E}_r$ between predicted and true rank-$r$ SVD reconstruction error. For each cluster size and estimator, all 10 independent trials (uniform block samples) are shown. (a) slack versus cluster size; (b) empirical slack distribution across estimators.}
\label{fig:error-estimates-pdebench}
\end{figure}

Figure~\ref{fig:error-estimates-pdebench} highlights a regime where the exact bounds become excessively conservative.
Both max-norm and residual-based estimators produce large slack values, indicating that they substantially overestimate the reconstruction error.
As a result, these bounds are too restrictive to enable effective clustering under the target error constraint, which explains why exact methods fail to achieve meaningful compression in Table~\ref{tab:compression-transposed}.
At the same time, the approximate estimator remains tight and stable, allowing substantially larger clusters to be formed while still respecting the error constraint in practice.

\section{Downstream Task Evaluation on PDEBench}
\label{app:downstream-pdebench}

To complement the reconstruction-error-based evaluation, we perform a downstream task analysis on the PDEBench dataset.
While our primary objective is to control low-rank approximation error, it is important to understand how this error translates to performance in practical predictive settings.

We consider a simple time-series forecasting task on the PDEBench dataset.
Given a single input timestep, the goal is to predict the system state $20$ steps ahead.
We use a fixed training pipeline across all methods and vary only the representation of the input data via compression.

The following methods are compared:
\begin{itemize}
    \item \textbf{No compression (baseline):} original data without dimensionality reduction.
    \item \textbf{Full concatenation + SVD:} a single SVD applied to the fully concatenated matrix.
    \item \textbf{Approximate residual clustering (ours):} clustering using the approximate residual-based estimator followed by per-cluster truncated SVD, with varying target error thresholds $\varepsilon$.
\end{itemize}

We report mean squared error (MSE), relative $\ell_2$ error, and $R^2$ score on a held-out test set.
All experiments are conducted with fixed rank $r=10$ and horizon $20$.

\begin{table}[h]
\centering
\small
\begin{tabular}{lcccccc}
\toprule
Method & $\varepsilon$ & Comp. $\uparrow$ & Recon. $\downarrow$ & MSE $\downarrow$ & Rel-$\ell_2$ $\downarrow$ & $R^2$ $\uparrow$ \\
\midrule
Baseline (no compression) & -- & $1.0$ & $0$ & $2.66 \cdot 10^{-7}$ & $6.64 \cdot 10^{-4}$ & $0.9999996$ \\

Full SVD (all blocks) & -- & $20.0$ & $1.15 \cdot 10^{-1}$ & $7.21 \cdot 10^{1}$ & $1.11 \cdot 10^{1}$ & $-121.5$ \\

\midrule
Ours (approx.\ residual) & 0.005 & $3.59$ & $2.48 \cdot 10^{-3}$ & $8.80 \cdot 10^{-7}$ & $1.21 \cdot 10^{-3}$ & $0.9999985$ \\
Ours (approx.\ residual) & 0.01  & $4.95$ & $5.28 \cdot 10^{-3}$ & $9.93 \cdot 10^{-6}$ & $4.06 \cdot 10^{-3}$ & $0.9999834$ \\
Ours (approx.\ residual) & 0.02  & $6.09$ & $9.79 \cdot 10^{-3}$ & $5.12 \cdot 10^{-5}$ & $9.23 \cdot 10^{-3}$ & $0.9999146$ \\
Ours (approx.\ residual) & 0.03  & $6.89$ & $1.15 \cdot 10^{-2}$ & $7.91 \cdot 10^{-2}$ & $3.63 \cdot 10^{-1}$ & $0.868$ \\
Ours (approx.\ residual) & 0.04  & $7.93$ & $1.43 \cdot 10^{-2}$ & $1.20$ & $1.41$ & $-1.01$ \\
Ours (approx.\ residual) & 0.05  & $7.93$ & $2.91 \cdot 10^{-2}$ & $3.30$ & $2.34$ & $-4.51$ \\
\bottomrule
\end{tabular}
\caption{Downstream forecasting performance on PDEBench. Compression ratio (Comp.), reconstruction error (Recon.), and predictive metrics are reported.}
\label{tab:pdebench-downstream}
\end{table}

Several important observations emerge from the results (see Table~\ref{tab:pdebench-downstream}).

\textbf{(1) Reconstruction error strongly correlates with downstream performance in the low-error regime.}
For small approximation error (e.g., $\varepsilon \leq 0.02$), the degradation in downstream metrics remains minimal, despite achieving up to $6\times$ compression.
This suggests that controlling the Frobenius reconstruction error is sufficient to preserve predictive structure in this regime.

\textbf{(2) Sharp phase transition in performance degradation.}
As the reconstruction error increases beyond a threshold (between $\varepsilon=0.02$ and $\varepsilon=0.03$), downstream performance deteriorates rapidly.
This behavior indicates that the relationship between reconstruction error and task performance is highly non-linear.

\textbf{(3) Naive full concatenation is unstable for downstream tasks.}
Despite achieving a high compression ratio ($20\times$), applying SVD to the fully concatenated matrix results in catastrophic degradation of predictive performance.
This highlights the importance of clustering prior to compression, as naive concatenation destroys task-relevant structure.

\textbf{(4) Trade-off between compression and task fidelity.}
The proposed method enables explicit control of this trade-off via the target error threshold $\varepsilon$.
In practice, moderate compression ($4\times$--$6\times$) can be achieved with negligible downstream degradation.

\paragraph{Limitations and conclusion.}
We emphasize that downstream performance depends on the specific task and model used.
The considered forecasting setup is intentionally simple, and more complex tasks (e.g., operator learning or control) may exhibit different sensitivity to approximation error.
A comprehensive evaluation across diverse downstream tasks is left for future work.

Overall, these results support the use of reconstruction-error-based guarantees as a practical proxy for downstream performance, while also highlighting the existence of sharp degradation regimes that must be avoided in practice.

\end{document}